\title{Epimorphisms between linear orders}
\author{Riccardo Camerlo}
\address{Dipartimento di scienze matematiche \guillemotleft{Joseph-Louis
Lagrange}\guillemotright, Politecnico di Torino, Corso Duca degli Abruzzi 24,
10129 Torino --- Italy}
\email{riccardo.camerlo@polito.it}
\author{Rapha\"el Carroy}
\address{Dipartimento di matematica \guillemotleft{Giuseppe Peano}\guillemotright,
Universit\`a di Torino, Via Carlo Alberto 10, 10121 Torino --- Italy}
\email{raphael.carroy@unito.it}
\author{Alberto Marcone}
\address{Dipartimento di matematica e informatica, Universit\`a di Udine,
Via delle Scienze 208, 33100 Udine --- Italy}
\email{alberto.marcone@uniud.it}
\subjclass[2010]{Primary: 06A05; Secondary: 06A07}
\keywords{Linear orders; surjective maps; better quasi-orders}
\thanks{The second and third authors acknowledge the support of PRIN 2009 Grant
\lq\lq Modelli e Insiemi\rq\rq\  number 2009WY32E8 004, and the second author was as well
partially supported by SFB Grant number 878.}
\theoremstyle{plain}
\newtheorem{theorem}{Theorem}
\newtheorem{lemma}[theorem]{Lemma}
\newtheorem{fact}[theorem]{Fact}
\newtheorem{facts}[theorem]{Facts}
\newtheorem{cor}[theorem]{Corollary}
\newtheorem{prop}[theorem]{Proposition}
\theoremstyle{definition}
\newtheorem{definition}[theorem]{Definition}
\newtheorem{definitions}[theorem]{Definitions}
\newtheorem*{nota}{Notation}
\newcommand{\sse}{\Leftrightarrow}
\newcommand{\dom}{\operatorname{dom}}
\newcommand{\rest}[1]{ |_{#1}}
\newcommand{\rao} {\rightarrow}
\newcommand{\lgrao} {\longrightarrow}
\newcommand{\Cal}[1]{\ensuremath{\mathcal{#1}}}
\newcommand{\st} {\text{such that }}
\newcommand{\sub}{\subseteq}
\DeclareMathOperator{\im}{im}
\newcommand{\scat}{Scat}
\newcommand{\lin}{Lin}
\newcommand{\Lin}{LIN}
\newcommand{\strong}{\leq_{s}}
\newcommand{\nstrong}{\nleq_{s}}
\newcommand{\inj}{\leq_{i}}
\newcommand{\injcont}{\leq_{c}}
\newcommand{\colorcplt}{\leq_{col}}
\newcommand{\color}{\leq_{cQ}}
\newcommand{\Pow}{\mathcal{P}}
\begin{document}
\pagestyle{plain}

\begin{abstract}
We study the relation on linear orders induced by order preserving
surjections. In particular we show that its restriction to countable orders
is a bqo.
\end{abstract}
\maketitle

\tableofcontents

\section{Some generalities and the questions}

Fra\"iss\'e (\cite{fraisse}) conjectured that the class of countable linear
orders was a well-quasi-order (wqo for short) under order preserving
injections (also called embeddings). Laver (\cite{laverfraisse}) proved that
this class is in fact a better-quasi-order (bqo for short), which is much
stronger.

We are interested in the somehow dual quasi-order (qo for short) induced by
order preserving surjections, or \emph{epimorphisms}, between linear orders,
in particular the countable ones. What are the combinatorial properties of
this qo?

In Section \ref{background} we present the basic definitions and facts about
linear orders, bqos and wqos, and epimorphisms. The main result of Section
\ref{countableorders} states that order preserving surjections induce a bqo
on countable linear orders; this is a consequence of a theorem of van
Engelen, Miller and Steel (\cite{vEMSbqo}) stating that the class of
countable linear orders with continuous order preserving injections preserves
bqos. We also look at the stronger notion of preserving bqos and show how to adapt it to epimorphisms in order to keep its validity in this setting. In
Section \ref{descriptionofspecialclasses} we apply the tools that we have
developed earlier to describe explicitly the relation of epimorphism on some
restricted classes of linear orders, such as ordinals.

\section{Background} \label{background}

\subsection{On linear orders}

\begin{definitions}~
\begin{itemize}
\item A \emph{linear order} is a reflexive, transitive, antisymmetric and
    total relation on a non-empty set $K$; we usually denote it with
    $\leq_K$ (or $\leq$ when no confusion arises) and we say (abusively)
    that $K$ is a linear order. We also write $<_K$ or $<$ for the strict
    part of the order.
\item Given a linear order $K$, a \emph{suborder} of $K$ is a subset of $K$
    along with the induced order on it.
\item A linear order $K$ is \emph{dense} when, given any elements $x,y$ in
    $K$, if $x< y$ holds then there is a $z\in K$ so that $x<z<y$ holds.
    Denote $\eta$ the unique, up to isomorphism, dense countable linear
    order without end-points, i.e.\ the order of the rationals.
\item A linear order is \emph{scattered} when $\eta$ is not among its
    suborders.
\item A linear order is \emph{$\sigma$-scattered} if it is a countable
    union of scattered suborders.
\item Denote by $\lin$ (resp.\ $\Lin$) the class of all countable (resp.\
    all) linear orders, and by $\scat$ the class of \emph{scattered
    countable} linear orders.
\item An order is \emph{complete} when all its non-empty upper bounded
    subsets have least upper bound.
\item An order is \emph{bounded} if it has both a maximum and a minimum.
\end{itemize}
\end{definitions}

\begin{fact}
Every linear order $K$ can be completed using Dedekind cuts (see
\cite[Definition 2.22]{Rosens1982}).
\end{fact}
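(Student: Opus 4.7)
The plan is to carry out the standard Dedekind-cut construction and verify the two things the statement demands: that the result is a linear order, and that it is complete in the sense defined above. I would define a \emph{cut} of $K$ to be a downward-closed subset $A\sub K$ (i.e.\ $x\in A$ and $y\leq_K x$ imply $y\in A$) satisfying the additional closure condition that, whenever $A$ admits a least upper bound in $K$, that bound already lies in $A$. Let $\hat K$ be the collection of all such cuts, ordered by inclusion. Reflexivity, transitivity and antisymmetry come for free from $\sub$, and totality is immediate because in a linear order any two downward-closed subsets are $\sub$-comparable: if $A\not\sub B$ and $B\not\sub A$, picking $x\in A\setminus B$ and $y\in B\setminus A$ one of $x<_Ky$, $y<_Kx$ holds, each of which contradicts the choice of $x$ or $y$ by downward closure.

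Next I would define the canonical map $\iota\colon K\to\hat K$ by $\iota(x)=\{y\in K\mid y\leq_K x\}$; this is clearly an order-preserving injection, and its image consists precisely of those cuts that possess a maximum. For completeness, let $\mathcal S\sub\hat K$ be non-empty and bounded above by a cut $B$, and consider $C=\bigcup_{A\in\mathcal S}A$. Then $C$ is downward closed as a union of downward-closed sets, and $C\sub B$, so $C$ is bounded in $K$. If $C$ already satisfies the closure condition it is the supremum of $\mathcal S$; otherwise $C$ has a least upper bound $s\in K\setminus C$, and $C\cup\{s\}=\iota(s)$ is a cut and is the supremum of $\mathcal S$. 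In either case $\sup\mathcal S$ exists in $\hat K$.

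The only delicate point — and the one where conventions must be pinned down carefully to avoid artifacts — concerns the boundary cases $A=\emptyset$ and $A=K$, and the distinction between cuts whose lower part has a greatest element in $K$ and those whose lower part has only a supremum in $\hat K\setminus\iota(K)$. I would exclude $\emptyset$ from $\hat K$ (and, symmetrically, exclude $K$ unless one wants to adjoin a top) when $K$ already has a minimum, so that $\iota$ does not miss any element and the completion is a linear extension of $K$ in the sense induced by $\iota$. Since the statement merely invokes this construction as a known tool and refers to Rosenstein, I would not belabor uniqueness or minimality of $\hat K$; verifying the existence, order, and completeness properties as above is enough to justify the fact.
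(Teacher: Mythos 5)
Your construction is correct and is precisely the standard Dedekind-cut completion that the paper invokes by citation only (it gives no proof of its own, deferring to Rosenstein), so the approaches coincide. The one unstated step is the identity $C\cup\{s\}=\iota(s)$ when $s=\sup C$ exists and $s\notin C$ --- any $y<s$ outside the downward-closed set $C$ would be a smaller upper bound of $C$, so every $y<s$ lies in $C$ --- and your boundary conventions for $\emptyset$ and $K$ should simply be fixed so that no endpoints are adjoined, since the paper's later definition of the closure $\overline{L}$ adds endpoints as a separate step after completing.
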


\begin{nota}
If $x,y$ are elements of an order $L$ with $x<y$, square bracket notation
will be used to denote the interval they determine. For example $[x,y]=\{
z\mid x\leq z\leq y\} $ denotes the closed interval between $x$ and $y$.
Other similar notations such as $\left]x,y\right[$ or $\left[ x,
{\rightarrow} \right[$ are self-explaining.

The closed interval notation $[x,y]$ will be sometimes used regardless of how
$x$ and $y$ are ordered, meaning in any case the set of all elements between
them.
\end{nota}

The following is a useful description of order preserving functions that are
continuous with respect to the order topologies. Recall that a basis for the
order topology consists of the open intervals, including those of the form
$\left]{\leftarrow },x\right[$ and $\left]x,{\rightarrow }\right[$.

\begin{lemma} \label{continuity}
Let $K,L\in \Lin $ and let $f:K\to L$ be order preserving. Then $f$ is
continuous if and only if for every non-empty subset $A$ of $K$, if the
supremum (or the infimum) of $A$ exists, then the same holds for $f(A)$ and
$f(\sup A)=\sup f(A)$ (or $f(\inf A)=\inf f(A)$).
\end{lemma}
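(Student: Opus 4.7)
The plan is to argue the two implications separately, translating between continuity (preimages of basic open intervals are open) and the order-theoretic sup/inf preservation condition.

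For the forward direction, assume $f$ is continuous and let $A\sub K$ be non-empty with $s=\sup A$ existing; monotonicity makes $f(s)$ an upper bound of $f(A)$, so the task is to exclude smaller upper bounds. Suppose towards a contradiction that $t<f(s)$ is an upper bound of $f(A)$. Since $f^{-1}(\left]t,\rightarrow\right[)$ is open by continuity and contains $s$, it contains a basic open interval $W\ni s$. If $s\in A$ then $f(s)=\max f(A)\leq t$, a contradiction, so $s\notin A$; then $s$ cannot be the minimum of $K$ (else $A=\{s\}$), and $W$ contains a half-open interval $\left]a,s\right]$ with $a<s$. As $s=\sup A$ with $s\notin A$, some $b\in A$ lies in $\left]a,s\right[\sub W$, and then $f(b)>t$ contradicts the choice of $t$. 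The infimum case is symmetric.

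For the converse, it is enough to show $f^{-1}(\left]c,\rightarrow\right[)$ is open for every $c\in L$, since the symmetric argument using inf preservation handles $f^{-1}(\left]\leftarrow,c\right[)$ and these rays subbase the order topology. Set $U=\{x\in K\mid f(x)>c\}$, which is upward closed by monotonicity. Given $x\in U$, if some $a\in U$ satisfies $a<x$ then $\left]a,\rightarrow\right[\sub U$ is an open neighborhood of $x$. Otherwise $x=\min U$; if $x=\min K$ then upward closure forces $U=K$, already open, so assume $\left]\leftarrow,x\right[$ is non-empty. I claim $x$ has an immediate predecessor in $K$: if not, then $\sup\left]\leftarrow,x\right[=x$, so by the hypothesis $f(x)=\sup f(\left]\leftarrow,x\right[)\leq c$ (each $y<x$ satisfies $y\notin U$ and hence $f(y)\leq c$), contradicting $f(x)>c$. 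Letting $p$ be this predecessor, $\left]p,\rightarrow\right[=\left[x,\rightarrow\right[$ is a basic open neighborhood of $x$ sitting inside $U$.

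The conceptual content is minimal, since sup/inf preservation encodes precisely the continuity requirement at genuine order-topological limits. The main technical obstacle is the careful bookkeeping of boundary cases — whether $s\in A$, whether $s$ or $x$ is an extremum of $K$, and whether immediate predecessors exist — so that in each situation one of the basic neighborhoods $\left]a,b\right[$, $\left]a,\rightarrow\right[$, or $\left]\leftarrow,b\right[$ actually fits inside the given open set.
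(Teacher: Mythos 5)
Your proof is correct and follows essentially the same route as the paper's: the forward direction reverses a hypothetical smaller upper bound through the open preimage of a ray, and the converse applies sup/inf preservation to the initial segment $\left]\leftarrow,x\right[$ to rule out problematic points without immediate predecessors. The only (cosmetic) difference is that you organize the converse around the subbase of open rays and upward-closed preimages, whereas the paper works directly with a basic interval $\left]b,b'\right[$; the key step is identical.
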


\begin{proof}
Let $f$ be continuous and suppose that $\sup A$ exists, in order to show that
$f(\sup A)=\sup f(A)$ holds. Since $f$ is order preserving, $f(\sup A)$ is an
upper bound for $f(A)$. If there were some upper bound $b$ of $f(A)$ with
$b<f(\sup A)$, then $f^{-1}(\left] b, {\rightarrow} \right[)$ would contain
$\sup A$ but no elements of $A$. Since any neighbourhood of $\sup A$ contains
elements of $A$, this contradicts the continuity of $f$. Similarly for the
greatest lower bound.

Conversely, assume the condition on extrema. Take $b,b'\in L$, with $b<b'$,
to show that $f^{-1}(\left] b,b' \right[)$ is open. Let $a\in K$ be such that
$b<f(a)<b'$ holds. It is enough to prove that if $a$ is not the least element
of $K$ and does not have an immediate predecessor, then there is $c<a$ such
that $b<f(c)$ (and similarly if $a$ is not the last element of $K$ and does
not have an immediate successor). But then letting $A = \left] {\leftarrow},
a \right[ $ one has $a=\sup A$, so that $f(a)=\sup f(A)$ holds, which implies
the claim. A similar argument shows that the preimages of other kinds of
basic open sets are open.
\end{proof}

We recall for convenience the definition of the backwards, the sum and the
product of linear orders.

\begin{definitions}~
\begin{itemize}
\item Given $K\in\Lin$ we call \emph{backwards} or \emph{reversal} of $K$
    and we denote $K^\star$ the order that has the same domain as $K$ and
    such that $x\leq_{K^\star}y$ holds if and only if $y \leq_K x$ does.
\item Let $K$ be in $\Lin$ and for every $i\in K$ take $L_i\in \Lin$. The
    sum $\sum_{i\in K}L_i$ is the set $\{(i,l)\mid i\in K\,\&\,l\in L_i\}$
    ordered lexicographically. As a particular case we shall write finite
    sums as $L_1+\cdots+L_n$.

Notice that, since according to our definition a linear order is non-empty,
whenever we consider a sum it is tacitly assumed that both its index set
and all of its summands are non-empty.
\item Given $K,L\in\Lin$ the product $K\cdot L$ or simply $KL$ is the set
    $K\times L$ ordered antilexicographically.
\end{itemize}
\end{definitions}

\subsection{On better-quasi-orders}

We recall here the definitions of well-quasi-order and of better-quasi-order.

\begin{definitions}\label{def:qobasics}~
\begin{itemize}
\item A \emph{quasi-order}, or qo, is a transitive reflexive relation on
    some set $Q$. We typically write $\leq_Q$ for a qo on $Q$.
\item An infinite sequence $(q_n)$ of elements of $Q$ is {\it bad} if for
    all $n,m$ in $\omega$ such that $n<m$ we have $q_n \nleq_Q q_m$.
\item A qo $(Q,\leq_Q)$ is \emph{well-quasi-ordered}, or wqo, if there are
    no bad sequences.
\item We let $[\omega]^\omega$ denote the set of infinite subsets of
    $\omega$ with the topology induced by the topology on $2^\omega$ under
    the identification of a set with its characteristic function. For
    $X\in[\omega]^\omega$, we let $[X]^\omega$ be the set of infinite
    subsets of $X$.
\item If $Q$ is a set, a $Q$-{\it array} is a function $f$ with domain
    $[X_0]^\omega$ for some $X_0 \in [\omega]^\omega$ and values in $Q$
    such that $f^{-1}(\{y\})$ is open for all $y\in Q$.
\item If $(Q,\leq_Q)$ is a qo, a $Q$-array $f$ is \emph{bad} if for all $X
    \in \dom(f)$ we have $f(X)\nleq_Q f(X^+)$, where $X^+=X\backslash\{\min
    X\}$.
\item A qo $(Q,\leq_Q)$ is a \emph{better-quasi-order} (bqo) if there are
    no bad $Q$-arrays.
\end{itemize}
\end{definitions}

The original definition of bqo is due to Nash-Williams (\cite{nashwell}). The
equivalent definition we gave is due to Simpson (\cite{simpsonbqo}). For more
concerning bqos, see \cite{vEMSbqo, LStRbqo, fbqo}.

\begin{facts}~
\begin{enumerate}
\item Every bad sequence $(q_n)_{n\in\omega}$ in $Q$ induces a bad
    $Q$-array  defined by $f(X)=q_{\min X}$, so that every bqo is indeed a
    wqo.
\item A straightforward application of the Galvin-Prikry theorem shows that
    every finite union of bqos (and in particular any finite qo) is a bqo.
\item The Galvin-Prikry theorem implies also that every finite product of
    bqos is a bqo.
\end{enumerate}
\end{facts}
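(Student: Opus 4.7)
The plan is to treat the three claims in sequence, each following quickly from the definitions together with an application of Galvin--Prikry in the last two.

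For (1), I will construct directly from a bad sequence $(q_n)$ in $Q$ the function $f:[\omega]^\omega\to Q$ defined by $f(X)=q_{\min X}$. It is a $Q$-array because each level set $f^{-1}(\{q\})$ is the union of the basic clopen sets $\{X\mid\min X=n\}$ over those $n$ with $q_n=q$. It is bad because for every $X$ one has $\min X<\min X^+$ and hence $f(X)=q_{\min X}\nleq_Q q_{\min X^+}=f(X^+)$ by badness of the sequence. So if there is a bad sequence, there is a bad array, proving that every bqo is a wqo.

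For (2), write $Q=Q_1\cup\cdots\cup Q_k$ with each $Q_i$ a bqo and suppose for contradiction that $f$ is a bad $Q$-array on $[X_0]^\omega$. The sets $f^{-1}(Q_i)=\bigcup_{q\in Q_i}f^{-1}(\{q\})$ are open, hence Borel, and cover $[X_0]^\omega$ by finitely many sets. Galvin--Prikry then produces $X\in[X_0]^\omega$ and an index $i$ with $[X]^\omega\subseteq f^{-1}(Q_i)$, so that $f\rest{[X]^\omega}$ is a bad $Q_i$-array, contradicting that $Q_i$ is a bqo. The special case of finite qos follows by writing them as a finite union of singletons.

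For (3), induction on the number of factors reduces us to binary products. Given $f=(f_1,f_2):[X_0]^\omega\to Q_1\times Q_2$, each coordinate $f_i$ is itself a $Q_i$-array, since its level sets are unions of level sets of $f$; moreover, by Lindel\"ofness of the open partition of $\dom f$ into level sets, each $f_i$ takes only countably many values. Since the shift $X\mapsto X^+$ is continuous, the set $A=\{X\mid f_1(X)\leq_{Q_1}f_1(X^+)\}$ is a countable union of open sets, hence Borel. Galvin--Prikry gives $X\in[X_0]^\omega$ with either $[X]^\omega\subseteq A$ or $[X]^\omega\cap A=\emptyset$. The second alternative would make $f_1\rest{[X]^\omega}$ a bad $Q_1$-array, contradicting bqo of $Q_1$; in the first alternative, the badness of $f$ forces $f_2(Y)\nleq_{Q_2}f_2(Y^+)$ for all $Y\in[X]^\omega$, so that $f_2\rest{[X]^\omega}$ is a bad $Q_2$-array, contradicting bqo of $Q_2$. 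The only genuine subtlety is the Borelness verification needed to invoke Galvin--Prikry; both times it reduces to the fact that arrays have open level sets and, by Lindel\"ofness, countable range.
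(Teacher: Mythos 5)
Your proposal is correct and follows exactly the route the paper indicates: the paper states these as facts without detailed proof, merely pointing to the array $f(X)=q_{\min X}$ for (1) and to ``a straightforward application of the Galvin--Prikry theorem'' for (2) and (3), which is precisely what you carry out (including the genuinely needed verifications of openness of level sets, countability of the range, and Borelness of the set $A$). Nothing to correct.
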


\begin{definitions}\label{defn:basics}~
Given $K$ and $L$ in $\Lin$, we write
\begin{itemize}
\item $K \inj L$ if there is an order preserving injection from $K$ into
    $L$.
\item $K\injcont L$ if there is an order preserving continuous injection
    from $K$ into $L$.
\end{itemize}
\end{definitions}

It is obvious that $K \injcont L$ implies $K \inj L$.

\begin{definitions}[\cite{LStRbqo}]\label{defn:preservebqos}
Let $\Cal{C}$ be a class of structures and morphisms between them such that
the identities are $\Cal{C}$-morphisms and $\Cal{C}$-morphisms are closed
under composition.
\begin{itemize}
\item Given a qo $Q$, set
\[Q^{\Cal{C}}=\{f\mid f\mbox{ is a function, }\dom(f)\mbox{ is a }\Cal{C}\mbox{-structure, } \im(f)\sub Q\},\]
quasi-ordered as follows
\begin{multline*}
f_0\leq f_1 \Leftrightarrow\exists\,\Cal{C}\mbox{-morphism } g:\dom(f_0)\rao\dom(f_1)\\
\st\forall\,x\in\dom(f_0)\,f_0(x)\leq_Qf_1(g(x)).\end{multline*}
\item $\Cal{C}$ \emph{preserves bqos} if for all bqo $Q$ the class
    $Q^{\Cal{C}}$ is still bqo.

\end{itemize}
\end{definitions}

\begin{facts}~
\begin{enumerate}
\item If a class $\Cal{C}$ of structures preserves bqos then $\Cal{C}$ is a
    bqo under the qo induced by $\Cal{C}$-morphisms.
\item If $\Cal{C}$ preserves bqo then so does any subclass of $\Cal{C}$.
\end{enumerate}
\end{facts}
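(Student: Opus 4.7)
The first fact becomes transparent once one tests the preservation hypothesis on a trivial bqo. I would take $Q=\{\ast\}$, which is visibly bqo, and unwind $Q^{\Cal{C}}$: its elements are the constant $\ast$-valued functions indexed by $\Cal{C}$-structures, and the pointwise condition $f_K(x)\leq_Q f_L(g(x))$ in Definition \ref{defn:preservebqos} is automatic. So $f_K\leq f_L$ in $Q^{\Cal{C}}$ iff there exists a $\Cal{C}$-morphism $K\rao L$, i.e.\ $Q^{\Cal{C}}$ is qo-isomorphic to $\Cal{C}$ equipped with the qo induced by $\Cal{C}$-morphisms. The bqo assumption on $Q^{\Cal{C}}$ then transfers directly to $\Cal{C}$.

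For the second fact, fix a bqo $Q$ and a subclass $\Cal{D}\sub\Cal{C}$, understood in the natural (full) sense that $\Cal{D}$-structures are $\Cal{C}$-structures and the $\Cal{D}$-morphisms between two $\Cal{D}$-structures are exactly the $\Cal{C}$-morphisms between them. Then $Q^{\Cal{D}}\sub Q^{\Cal{C}}$ as classes of functions, and the $Q^{\Cal{D}}$-order is the restriction of the $Q^{\Cal{C}}$-order. Any putative bad $Q$-array $f$ with values in $Q^{\Cal{D}}$ has its openness condition $f^{-1}(\{y\})$ open for each $y$ unaffected by enlarging the codomain, so $f$ is also a $Q$-array into $Q^{\Cal{C}}$; and the badness condition $f(X)\nleq f(X^+)$ is preserved for the same reason the two orders agree on $Q^{\Cal{D}}$. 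Thus a bad array in $Q^{\Cal{D}}$ would be a bad array in $Q^{\Cal{C}}$, contradicting the bqo assumption.

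The only point that requires care is making explicit the reading of \emph{subclass} in the second fact: if one allowed $\Cal{D}$ to have strictly fewer morphisms between some pair of $\Cal{D}$-structures than $\Cal{C}$ does, then the $Q^{\Cal{D}}$-order could be strictly finer than the restricted $Q^{\Cal{C}}$-order and the implication above would fail. Under the full-subclass convention, however, both facts are formal, obtained respectively by \emph{specialization} of the preservation hypothesis to a trivial bqo and by \emph{restriction} to a subclass of domains.
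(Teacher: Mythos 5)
Both arguments are correct: specializing the preservation hypothesis to the one-point bqo $Q=\{\ast\}$ identifies $Q^{\Cal{C}}$ with $\Cal{C}$ under the morphism-induced qo, and a bad array into $Q^{\Cal{D}}$ is literally a bad array into $Q^{\Cal{C}}$ under the full-subclass reading. The paper states these facts without proof, and your justifications (including the caveat on how \emph{subclass} must be read) are exactly the standard ones the authors leave implicit.
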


Using this terminology, Laver's theorem (\cite{laverfraisse}) can be stated
as follows.

\begin{theorem}\label{Laver}
The class of $\sigma$-scattered linear orders under $\inj$ preserves bqos. In
particular $(\lin,{\inj})$ preserves bqos.
\end{theorem}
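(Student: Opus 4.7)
The plan is to follow the classical Laver/Nash-Williams strategy via the Minimal Bad Array Lemma (MBA) together with Hausdorff's structure theorem for scattered linear orders. Fix a bqo $Q$; the goal is to show that $Q^{\Cal{M}}$ is bqo, where $\Cal{M}$ denotes the class of $\sigma$-scattered orders under $\inj$. I would first establish the result for the scattered case $Q^{\scat}$, and then bootstrap to the full $\sigma$-scattered class.

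For the scattered case, recall Hausdorff's theorem: the class of scattered orders is the smallest class containing singletons and closed under well-ordered and reverse-well-ordered sums, endowing each scattered order with a Hausdorff rank. Assuming toward a contradiction that $Q^{\scat}$ is not bqo, I would apply MBA to obtain a bad $Q^{\scat}$-array $F$ which is minimal in a well-founded partial order on labelled scattered structures built from Hausdorff rank and $\inj$. For each $X\in\dom(F)$ write $F(X)=(L_X,\ell_X)$. After restricting via the Galvin-Prikry theorem, one may assume the top-level Hausdorff decomposition of the $L_X$ has a uniform shape. If all $L_X$ are singletons, $F$ yields a bad $Q$-array, contradicting that $Q$ is bqo. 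Otherwise, write $L_X=\sum_{\alpha<\beta_X}L_{X,\alpha}$ (or its reverse) with each $L_{X,\alpha}$ of strictly smaller Hausdorff rank, and extract an array valued in transfinite sequences over $Q^{\scat}$-labelled orders. By minimality of $F$ and Nash-Williams's theorem that transfinite sequences from a bqo form a bqo, this extracted array must be good; but goodness at this level translates back to goodness of $F$, a contradiction.

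For the reduction to $\sigma$-scattered, given a $\sigma$-scattered order $L=\bigcup_n L_n$ with each $L_n$ scattered, a $Q$-labelling of $L$ decomposes into $Q$-labellings of the scattered pieces $L_n$ organized according to how the $L_n$ glue into $L$. A bad array of $Q$-labelled $\sigma$-scattered orders can then be refined, via another application of MBA and Galvin-Prikry, into a bad array of $Q^{\scat}$-labelled countable structures of a simpler form, which reduces to the already established scattered case applied with $Q^{\scat}$ in place of $Q$. The ``in particular'' statement follows because every countable linear order is a countable union of singletons, hence $\sigma$-scattered.

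The main technical obstacle is the MBA step in the scattered case: one must design a well-founded partial order on $Q^{\scat}$-labelled scattered structures that is strictly decreased when passing from $L_X$ to its Hausdorff summands, and verify that the array of summands extracted from $F$ is genuinely bad in the transfinite-sequence qo. Aligning the $\inj$ order on labelled orders, the Hausdorff rank, and Nash-Williams's ordinal-sum lemma is the heart of Laver's argument, and any sketch must ultimately rest on these three ingredients.
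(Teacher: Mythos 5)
The paper does not prove this statement: it is Laver's theorem, quoted verbatim from \cite{laverfraisse} (in the \lq\lq preserves bqos\rq\rq\ formulation of \cite{LStRbqo}), and the paper uses it as a black box. So there is no in-paper argument to compare yours against; what can be assessed is whether your sketch would reconstruct Laver's proof.

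Your first paragraph names the right ingredients --- minimal bad arrays, a Hausdorff-type decomposition with a rank that drops on summands, and Nash-Williams's theorem that transfinite sequences over a bqo form a bqo --- and the scattered case you outline is essentially the shape of the classical argument, though every hard step (the choice of well-founded partial order for MBA, the verification that the extracted array of summand-sequences is bad, the translation of its goodness back to goodness of $F$) is deferred. The genuine gap is the second paragraph. A $\sigma$-scattered order $L=\bigcup_n L_n$ can interleave its scattered pieces densely (already $\eta$ is a countable union of singletons), and \lq\lq organized according to how the $L_n$ glue into $L$\rq\rq\ plus \lq\lq refined into a bad array of a simpler form\rq\rq\ does not describe an actual reduction; this gluing is precisely where the difficulty of the $\sigma$-scattered case lives, and it is not a corollary of the scattered case applied with $Q^{\scat}$ in place of $Q$. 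Laver instead proves a structure theorem realizing every $\sigma$-scattered order inside a single ramified hierarchy (sums whose index sets are countable unions of well-ordered and reverse well-ordered types, with summands of smaller rank) and runs the minimal-bad-array induction directly on that hierarchy, handling the countable/dense index sets together with the well-ordered ones. Also note that the \lq\lq in particular\rq\rq\ clause needs, besides the observation that countable orders are $\sigma$-scattered, the fact that preservation of bqos passes to subclasses (Fact 10.2 of the paper); that part is fine. As submitted, the proposal is a correct roadmap to the literature rather than a proof, which is consistent with how the paper itself treats the statement.
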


This result was strengthened by van Engelen, Miller and Steel
(\cite[Theorem 3.5]{vEMSbqo}).

\begin{theorem}\label{vEMS}
The class $(\lin,{\injcont})$ preserves bqos.
\end{theorem}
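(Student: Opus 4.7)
The plan is to adapt Nash-Williams' minimal bad array technique, which also underlies Theorem \ref{Laver}. Fix a bqo $Q$ and suppose, for contradiction, that $Q^{(\lin, \injcont)}$ admits a bad array $F$. Choose $F$ minimal with respect to a suitable well-founded rank on $Q^{(\lin, \injcont)}$: the natural choice is a Cantor-Bendixson-type rank on the order topology of each $F(X)$, combined lexicographically with the $Q$-labels and with the domain-wise ordering on arrays.

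For each $X \in \dom(F)$ let $K_X$ denote the underlying linear order of $F(X)$ equipped with its order topology, and consider the decomposition of $K_X$ into its isolated points and its derived set $K_X'$; this induces a condensation of $K_X$ into convex scattered blocks. By Lemma \ref{continuity}, any $\injcont$-morphism must send accumulation points to accumulation points and commute with existing suprema and infima, so this decomposition is functorial with respect to the morphisms of the class. After Galvin-Prikry thinning one may assume that the structural invariants of this decomposition (Cantor-Bendixson rank, endpoint behavior, isomorphism type of the condensation quotient) stabilize across $\dom(F)$. Form a ``quotient'' array $F'$ either by labeling the condensation of $K_X$ with elements of $Q^{(\scat, \injcont)}$, which is bqo by Theorem \ref{Laver}, or by labeling $K_X'$ itself with elements of $Q$. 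Both options strictly reduce the rank, so minimality of $F$ forces $F'$ to be good, yielding some $X \in \dom(F)$ with $F'(X) \leq F'(X^+)$.

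The final step is to \emph{lift} this comparison to a continuous order-preserving injection $K_X \to K_{X^+}$ compatible with the $Q$-labeling, thereby contradicting the badness of $F$. This lifting is the main obstacle: the individual block embeddings (supplied by the Laver-style comparison on scattered orders) must be glued into a single injection which still commutes with every existing supremum and infimum, simultaneously respecting left and right one-sided limits at the block boundaries. Coordinating these boundary conditions with the $Q$-labels is what makes the continuous case genuinely more delicate than Laver's, and is also the reason one must start from the strong bqo-preservation of $(\scat, \injcont)$ rather than from bare bqo-ness of scattered orders.
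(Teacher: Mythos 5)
First, a point of reference: the paper does not prove this statement at all. It is quoted from van Engelen, Miller and Steel \cite[Theorem 3.5]{vEMSbqo}, and the present paper uses it as a black box, so there is no in-paper argument to compare yours against. Judged on its own terms, your sketch has a structural gap that swallows the whole theorem: the step you yourself flag as ``the main obstacle'' --- gluing the block-by-block comparisons into a single \emph{continuous} order preserving injection $K_X\to K_{X^+}$ respecting the labels --- is exactly where all the content of the result lives in any minimal-bad-array proof. Announcing the obstacle is not the same as overcoming it; as written, the argument stops precisely at the point where it would have to diverge from Laver's.

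Two of the preparatory steps are also broken. (i) The rank reduction fails on non-scattered orders: for $K_X=\eta$ every point is an accumulation point, the Cantor--Bendixson derivative satisfies $K_X'=K_X$, and the finite condensation is trivial, so your ``quotient array'' $F'$ does not have strictly smaller rank and minimality of $F$ yields nothing. The non-scattered case is precisely where Theorem \ref{vEMS} says something beyond Theorem \ref{Laver}, so this is not a corner case. (ii) Your appeal to Theorem \ref{Laver} for the bqo-ness of $Q^{(\scat,\injcont)}$ is unjustified and close to circular: Laver's theorem concerns $\inj$, and since $K\injcont L$ implies $K\inj L$ but not conversely, the quasi-order on $Q^{\mathcal{C}}$ induced by continuous embeddings is \emph{smaller}, so being bqo for $(\scat,\injcont)$ is a strictly stronger assertion than for $(\scat,\inj)$ --- indeed it is itself an instance of the theorem you are trying to prove. (The closure-under-subclasses fact in the paper restricts the class of structures, not the class of morphisms.) Finally, the claimed ``functoriality'' of the isolated/derived decomposition is only half true: Lemma \ref{continuity} forces images of one-sided limit points to be one-sided limit points, but isolated points may land on accumulation points and blocks need not map to blocks, so the decomposition is not preserved by $\injcont$-morphisms in the way your reduction requires.
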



\subsection{Epimorphisms: definition and first properties} \label{epimorphisms}

Our main object of interest is introduced in the next definition.

\begin{definition}
Let $K,L \in \Lin$. We write $K \strong L$ if there is an order preserving
surjection, also called an \emph{epimorphism}, from $L$ onto $K$. Thus a
witness to the fact that $K \strong L$ holds is a surjective function $g:L\to
K$ such that for all $a,b \in L$ we have $(a \leq_L b \Rightarrow g(a) \leq_K
g(b))$.

Denote by $\equiv$ the induced equivalence relation and by $[K]$ the
equivalence class of $K$ under $\equiv$. We still use $ \strong $ for the
partial order induced on equivalence classes.
\end{definition}

\begin{definition}
If $L \in \Lin$ has no last element, the \emph{cofinality} $cof(L)$ of $L$ is
the least ordinal which is the length of a sequence unbounded above in $L$.
Given $K, L\in\Lin$ a map $f:K \rao L$ is \emph{cofinal} when its range is
unbounded above in $L$.

Similarly, when $L$ has no least element, we define the {\it coinitiality}
$coi(L)$ and \emph{coinitial} maps.
\end{definition}

Notice that $cof(L), coi(L) \leq |L|$ and hence if $L \in \lin$ has no last
element then $cof(L) = \omega$, and similarly for $coi(L)$.

\begin{facts}\label{facts:basics on epi}
Given $K, L \in \Lin$, we have
\begin{enumerate}
\item $K \strong L$ if and only if $L=\sum_{i\in K}L_i$.
\item If $K$ is finite and $|L| \geq |K|$ then $K\strong L$.
\item If $L$ has least (or last) element while $K$ does not, then $K
    \nstrong L$.
\item Let $g$ witness that $K \strong L$ holds then:
\begin{enumerate}
\item $g$ has a right inverse, that is an order preserving embedding of
    $K$ into $L$ and therefore $K \strong L$ implies $K \inj L$;
\item if $K$ does not have least (or last) element, any such right
    inverse is coinitial (or cofinal) in $L$;
\item if $L$ is a complete order, then so is $K$;
\item if $K$ is dense, then $g$ is continuous with respect to the order topology.
\end{enumerate}
\item If $K,L$ are without maximum and $K \strong L$, then $cof(K)=cof(L)$.
    Similarly for the coinitiality of linear orders without minimum.
\end{enumerate}
\end{facts}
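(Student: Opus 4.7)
The plan is to prove the five items in the order stated, since each of (4)(a)--(d) and (5) will rely on (1) and the right-inverse construction of (4)(a). Items (1)--(3) are essentially unpackings of the definition. For (1), if $g:L\to K$ is an epimorphism then the non-empty fibres $L_i:=g^{-1}(i)$ for $i\in K$ are convex suborders of $L$ by order preservation of $g$, and their union is all of $L$ by surjectivity, exhibiting $L$ as $\sum_{i\in K}L_i$; the converse is realised by the projection $(i,\ell)\mapsto i$. For (2), enumerate $K=\{k_0<_K\cdots<_K k_{n-1}\}$ and cut $L$ into $n$ consecutive non-empty intervals, mapping the $j$-th interval constantly to $k_j$. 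For (3), if $\ell_0=\min L$ and $g:L\to K$ is an epimorphism, then $g(\ell_0)\leq_K g(\ell)$ for every $\ell\in L$, so by surjectivity $g(\ell_0)=\min K$, contradicting the hypothesis.

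For (4)(a), choose, for each $k\in K$, an element $h(k)\in g^{-1}(k)$: then $g\circ h=\operatorname{id}_K$ together with the order preservation of $g$ forces $h$ to be an order preserving injection. For (4)(b), suppose $K$ has no least element but some $\ell\in L$ satisfies $\ell<_L h(k)$ for all $k\in K$; pick $k'<_K g(\ell)$ and compute $g(h(k'))=k'<_K g(\ell)$, contradicting $\ell<_L h(k')$. For (4)(c), given $A\sub K$ non-empty and bounded above by some $k_0$, note that $h(A)$ is bounded above by $h(k_0)$ in the complete order $L$, so $s:=\sup h(A)$ exists; a direct verification gives $g(s)=\sup A$ (upper-bound direction: $a=g(h(a))\leq_K g(s)$; least direction: for any upper bound $k$ of $A$ in $K$, $h(k)$ is an upper bound of $h(A)$, hence $s\leq_L h(k)$ and $g(s)\leq_K k$). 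For (4)(d), by Lemma \ref{continuity} it suffices to show that $g$ transfers suprema (and, dually, infima) of non-empty subsets of $L$; the inequality $\sup g(A)\leq_K g(\sup A)$ is immediate from order preservation of $g$, and if it were strict the density of $K$ would yield $k\in K$ with $\sup g(A)<_K k<_K g(\sup A)$; by surjectivity of $g$ pick $\ell\in g^{-1}(k)$, observe $\ell<_L\sup A$ (else $g(\ell)\geq_K g(\sup A)$), so $\ell$ is not an upper bound of $A$, and any $a\in A$ with $a>_L\ell$ satisfies $g(a)\geq_K k$, contradicting the choice of $k$.

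For (5), combine $g$ with a right inverse $h$ from (4)(a). A cofinal sequence $(\ell_\alpha)_{\alpha<cof(L)}$ in $L$ pushes forward via $g$ to a cofinal set in $K$: given $k\in K$, pick $k'>_K k$ (using that $K$ has no maximum), set $\ell:=h(k')$, and take any $\alpha$ with $\ell_\alpha\geq_L\ell$, so that $g(\ell_\alpha)\geq_K k'>_K k$; thinning to a well-ordered cofinal subsequence yields $cof(K)\leq cof(L)$. Conversely, a cofinal sequence $(k_\alpha)_{\alpha<cof(K)}$ in $K$ lifts via $h$ to a cofinal sequence in $L$: given $\ell\in L$, pick $k>_K g(\ell)$ and then $\alpha$ with $k_\alpha\geq_K k$, so $h(k_\alpha)\geq_L h(k)>_L\ell$. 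The coinitiality claim follows symmetrically, for instance by passing to the reversals $K^\star$ and $L^\star$. The main point where some care is needed is (4)(d), where the density hypothesis on $K$ is essential to close the gap between $\sup g(A)$ and $g(\sup A)$; the remaining verifications are careful but routine bookkeeping with $g$ and its right inverse.
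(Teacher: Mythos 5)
Your proposal is correct and takes essentially the same route as the paper, which only records a proof for (4d) (``use Lemma \ref{continuity}'') and (5) (push a cofinal sequence forward along $g$ and lift one back along the right inverse), treating the remaining items as immediate; you supply the same arguments with the routine verifications written out. The only cosmetic point is in (4d), where you should phrase the density argument in terms of an arbitrary upper bound $b<_K g(\sup A)$ of $g(A)$ rather than $\sup g(A)$ itself, since the existence of $\sup g(A)$ is part of what is being proved.
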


\begin{proof}
For (4d) use Lemma \ref{continuity}.

(5) The existing epimorphism $g:L\to K$ grants $cof(K)\leq cof(L)$. The right
inverse of $g$ witnesses $cof(L)\leq cof(K)$.
\end{proof}

\begin{lemma} \label{completevsall}
Let $K$ be a complete order and $L$ be any order. There is an order
preserving surjection $g:L\to K$ if and only if one of the following cases
holds:
\begin{enumerate}
\item $K$ has minimum $a$, a maximum $b$ and there is an order preserving
    injection $f:K\to L$;
\item $K$ has minimum $a$, no maximum and there is an order preserving
    cofinal injection $f:K\to L$;
\item $K$ has maximum $b$, no minimum and there exists an order preserving
    coinitial injection $f:K\to L$;
\item $K$ does not have minimum nor maximum and there exists an order
    preserving, coinitial and cofinal injection $f:K\to L$.
\end{enumerate}
\end{lemma}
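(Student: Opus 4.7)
The forward direction is essentially handed to us by Facts \ref{facts:basics on epi}: given an epimorphism $g:L\to K$, part (4a) yields an order-preserving right inverse $f:K\to L$, and part (4b) tells us that $f$ is coinitial (resp.\ cofinal) whenever $K$ lacks a minimum (resp.\ maximum). So the four sub-cases of the lemma correspond exactly to the four possible combinations of presence/absence of extrema in $K$.

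For the converse, my plan is to construct $g$ uniformly across the four cases as a ``$\sup$-on-the-left'' map. Given $f$ as in the statement, set
\[A_y=\{x\in K\mid f(x)\leq_L y\}\quad\text{for each }y\in L,\]
and define
\[g(y)=\begin{cases}\sup A_y & \text{if }A_y\neq\emptyset,\\ a & \text{if }A_y=\emptyset,\end{cases}\]
where $a$ denotes the minimum of $K$ when it exists. To see that $g$ is well-defined I need to check, for each case, that whenever $A_y\neq\emptyset$ it is also bounded above in $K$: in cases (1) and (3) the maximum $b$ of $K$ provides an upper bound, while in cases (2) and (4), given $y$, cofinality of $f$ together with the absence of a maximum in $K$ lets me pick $x^\ast\in K$ with $f(x^\ast)>y$, which bounds $A_y$. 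The emptiness of $A_y$ can only occur in cases (1) and (2), where $K$ has a minimum, so the second clause of the definition is never needed in cases (3) and (4); in those cases the coinitiality of $f$ guarantees $A_y\neq\emptyset$ for all $y$.

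Next I verify the three desired properties. Monotonicity is immediate from $y_1\leq y_2\Rightarrow A_{y_1}\sub A_{y_2}$, taking care in cases (1) and (2) that $a$ is the minimum so $g(y_1)=a\leq g(y_2)$ when $A_{y_1}=\emptyset$. For surjectivity I compute, for $x\in K$, that $A_{f(x)}=\{x'\in K\mid x'\leq x\}$ because $f$ is a strictly order-preserving injection; hence $g(f(x))=\sup\{x'\mid x'\leq x\}=x$, so $g\circ f=\mathrm{id}_K$ and in particular $g$ is onto.

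The only real obstacle is the bookkeeping that ensures $A_y$ is both non-empty and bounded above \emph{exactly when} the appropriate hypothesis (coinitiality of $f$, cofinality of $f$, or presence of $a$ or $b$ in $K$) is available. The argument above shows that in each of the four cases the hypotheses supplied by the lemma are precisely what is needed, and no more: in particular, removing coinitiality in case (3) or cofinality in case (2) would break non-emptiness or boundedness, confirming both the sufficiency and the sharpness of the stated conditions.
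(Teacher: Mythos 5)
Your proof is correct and follows essentially the same route as the paper: necessity via the right inverse from Facts \ref{facts:basics on epi}, and sufficiency by reversing $f$ through suprema of the sets $\{x\in K\mid f(x)\leq y\}$, whose existence is guaranteed by completeness together with the coinitiality/cofinality hypotheses. The only (harmless) difference is that you use $\sup$ uniformly in all four cases, whereas the paper switches to $\inf\{x\in K\mid f(x)\geq y\}$ on an initial segment of $L$ in cases (3) and (4).
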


\begin{proof}
The necessity of the condition, in each of the four cases, is witnessed by any right inverse $f$ of $g$.
Conversely, for each of the four cases, an epimorphism $g:L\to K$ is built as follows:

(1), (2)
\begin{align*}
g(y)=
\begin{cases}
a & \mbox{if } y<f(a) \\
\sup\{ x\in K\mid f(x)\leq y\} & \mbox{if } y\geq f(a)
\end{cases}
\end{align*}

(3)
\begin{align*}
g(y)=
\begin{cases}
\inf\{ x\in K\mid f(x)\geq y\} & \mbox{if } y\leq f(b) \\
b & \mbox{if } y>f(b)
\end{cases}
\end{align*}

(4)
Fix $c\in K$ and define
\begin{align*}
g(y)=
\begin{cases}
\inf\{ x\in K\mid f(x)\geq y\} & \mbox{if } y<f(c) \\
\sup\{ x\in K\mid f(x)\leq y\} & \mbox{if } y\geq f(c)
\end{cases}
\end{align*}
\end{proof}

Cases 1 and 2 apply in particular when $K$ is a well-order.

\section{The structure of $ \strong $} \label{countableorders}
\subsection{Basic facts}
We start by proving the following three useful propositions.

\begin{prop}\label{propnonscat}
For any $L \in \lin$:
\begin{enumerate}
\item $L \strong \eta$;
\item $L \strong  1+\eta $ if and only if $L$ has a minimum;
\item $L \strong \eta +1$ if and only if $L$ has a maximum;
\item $L \strong 1+\eta +1$ if and only if $L$ has minimum and maximum.
\end{enumerate}
Moreover:
\begin{enumerate}\setcounter{enumi}4
\item $\inf ([1+\eta ],[\eta +1])=[1+\eta +1]$;
\item $\sup ([1+\eta ],[\eta +1])=[\eta ]$ (even when the $\sup$ is taken
    in $\Lin$).
\end{enumerate}
\end{prop}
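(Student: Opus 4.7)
My plan is to treat items (1)--(4) first as elementary and then deduce (5) and (6).

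For (1), I observe that the sum $\sum_{l \in L} \eta$ is a countable dense linear order without endpoints: density between distinct summands holds because each copy of $\eta$ lacks both minimum and maximum, so intermediate elements are always available in either copy, while density and absence of endpoints inside a single copy are inherited from $\eta$. By the uniqueness of the countable dense linear order without endpoints, $\sum_{l \in L} \eta \cong \eta$, and the projection onto the indexing order $L$ is the required epimorphism. For (2), the forward direction is immediate: the minimum of $1+\eta$ must map to the minimum of the image. For the converse, when $|L| \geq 2$ I decompose $L = \{\min L\} + (L \setminus \{\min L\})$ and apply (1) to the second summand, extending by sending the leading $1$ to $\min L$; the case $|L|=1$ is trivial. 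Case (3) is dual, and (4) combines (2) with (3).

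For (5), explicit epimorphisms realize $[1+\eta+1] \leq [1+\eta]$ and $[1+\eta+1] \leq [\eta+1]$ (mapping designated endpoints to corresponding endpoints and collapsing a tail onto the extra endpoint). For maximality, if $[K] \leq [1+\eta]$ and $[K] \leq [\eta+1]$ then (2) and (3) force $K$ to have both extrema, so (4) yields $[K] \leq [1+\eta+1]$.

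For (6), that $[\eta]$ is an upper bound follows from (1) applied to $L = 1+\eta$ and $L = \eta+1$. The delicate part, asserted to hold even in $\Lin$, is the least-upper-bound property. Given $M \in \Lin$ with epimorphisms $g_1 : M \to 1+\eta$ and $g_2 : M \to \eta+1$, I shall construct an epimorphism $g : M \to \eta$. Let $J := g_1^{-1}(\min(1+\eta))$, a nonempty initial segment of $M$, and define
\[ g(m) = \begin{cases} g_2(m) & \text{if } m \in J, \\ g_1(m) & \text{if } m \in M \setminus J, \end{cases} \]
with codomain $T := g_2(J) + Q$, the formal sum of linear orders where $Q := (1+\eta) \setminus \{\min\} \cong \eta$. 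Order-preservation and surjectivity of $g$ onto $T$ are routine: the only crossing to check is that $J$ precedes $M \setminus J$ in $M$ while $g_2(J)$ precedes $Q$ in $T$. The main obstacle will be showing $T \cong \eta$: since $g_2(J)$ is a nonempty initial segment of $\eta+1$ (as the order-preserving image of an initial segment), it is isomorphic to $\eta$ or to $\eta+1$, so $T$ is either $\eta + \eta$ or $\eta + 1 + \eta$. Both are countable, dense, and endpoint-free, whence $T \cong \eta$ by uniqueness. I expect this construction to cover uniformly both the case where $J$ and $g_2^{-1}(\max(\eta+1))$ are disjoint and the case where they overlap.
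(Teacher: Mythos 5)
Your proposal is correct and follows essentially the same route as the paper: items (1)--(4) via $\eta\cong\sum_{l\in L}\eta$, item (5) by combining (2)--(4), and item (6) by splitting off the preimage $J$ of $\min(1+\eta)$, observing that the $(\eta+1)$-structure makes $J$ surject onto $\eta$ or $\eta+1$, and invoking $\eta+\eta\cong\eta+1+\eta\cong\eta$. The only difference is cosmetic: you phrase the argument with explicit epimorphisms $g_1,g_2$ and the glued target $T=g_2(J)+Q$, whereas the paper phrases the same idea in terms of sum decompositions via Fact \ref{facts:basics on epi}.1.
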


\begin{proof}
First notice that $\eta$ is isomorphic to $\eta L$ for any $L\in \lin$, so
(1)--(4) follow easily from Fact \ref{facts:basics on epi}.1.

For (5) suppose $L \strong 1+\eta$ and $L \strong\eta +1$, so that $L$ has
both a first and a last element. The assertion then follows from (4).

It remains to prove (6) in $\Lin$. If $1+\eta \strong L$ and $\eta +1 \strong
L$, then $L$ can be written both as a ($1+\eta $)-sum $\Sigma $ and as an
($\eta +1$)-sum $\Sigma'$. Then the first summand in $\Sigma $ can be written
either as an $\eta $-sum or as an ($\eta +1$)-sum. The claim follows as $\eta
+\eta =\eta +1+\eta =\eta $.
\end{proof}

\begin{prop}\label{structnonscat}
Let $L$ be a countable, non-scattered, linear order. Then exactly one of the
following four possibilities holds:
\begin{enumerate}
\item $\eta \strong L$;
\item $L=L_0+ \hat L $, for some unique $L_0$ and $\hat L$ with $L_0$
    scattered and $\eta \strong \hat L$ (in which case $1+\eta \strong L$);
\item $L= \hat L +L_1$, for some unique $L_1$ and $\hat L$ with $L_1$
    scattered and $\eta \strong \hat L$ (in which case $\eta+1 \strong L$);
\item $L= L_0+ \hat L + L_1$, for some unique $L_0$, $L_1$ and $\hat L$
    with $L_0$ and $L_1$ scattered and $\eta \strong \hat L$.
\end{enumerate}
In particular, $1+\eta+1 \strong L$ for any countable, non-scattered $L$.
\end{prop}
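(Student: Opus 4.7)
My plan is to introduce the equivalence relation $\sim $ on $L$ given by $x\sim y$ iff the interval $[x,y]$ (understood regardless of the order of $x$ and $y$) is scattered, and then to read the four cases off the structure of the quotient $L/{\sim }$. The cornerstone is the following combinatorial fact: if $\eta $ embeds into a concatenation $A+B$, then it already embeds into $A$ or into $B$. Indeed, any embedding $\phi \colon \eta \rao A+B$ writes $\eta $ as $\phi^{-1}(A)+\phi^{-1}(B)$, and every non-empty initial segment of $\eta \cong \mathbb{Q}$ is isomorphic to $\eta $ or to $\eta +1$ (and symmetrically for final segments), so at least one of the two pieces contains a copy of $\eta $. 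Transitivity of $\sim $ follows at once, and each equivalence class is then convex and itself scattered, since a copy of $\eta $ inside a class would exhibit two of its points with a non-scattered interval between them.

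Next I would analyse the order on $L/{\sim }$. It has more than one class (otherwise $L$ itself would be scattered), and between any two distinct classes $C<C'$ there lies a third one: if not, choosing $x\in C$ and $x'\in C'$ with $x<x'$ forces $[x,x']\sub C\cup C'$, written as the concatenation of the scattered suborders $[x,x']\cap C$ and $[x,x']\cap C'$, hence scattered by the combinatorial fact above, contradicting $x\not \sim y$. So $L/{\sim }$ is a countable dense linear order with at least two elements, isomorphic to one of $\eta $, $1+\eta $, $\eta +1$, or $1+\eta +1$, and these four possibilities correspond exactly to the four cases in the statement: when $L/{\sim }$ has a minimum (resp.\ maximum), its least (resp.\ greatest) class is the scattered piece $L_0$ (resp.\ $L_1$); in every case the remaining part $\hat L$ is an $\eta $-indexed sum of scattered classes, so $\eta \strong \hat L$ by Fact \ref{facts:basics on epi}.1.

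For uniqueness, given any decomposition of the required form, $L_0'$ lies in a single $\sim $-class, since $L_0'$ is scattered and an initial segment of $L$; moreover this class cannot overflow into $\hat L'=\sum_{i\in \eta }M_i$, because if it contained an element of some $M_{i^*}$, then by downward-closure in $\hat L'$ it would contain the whole subsum $\bigsqcup_{j<i^*}M_j$, an $\eta $-sum of non-empty sets (as $\{j<i^*\}\cong \eta $) and hence non-scattered, contradicting the scatteredness of the class. The symmetric argument pins down $L_1'$, and $\hat L'$ is thereby forced. I expect this uniqueness step to be the most delicate part, essentially because of the bookkeeping over the four cases and their degenerate variants. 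Finally, for the ``in particular'' claim, write $L=L_0+\hat L+L_1$ (with $L_0$ or $L_1$ possibly absent) and fix any $i_0<i_1$ in $\eta $; the partition $(L_0+\sum_{i\leq i_0}M_i)+\sum_{i_0<i<i_1}M_i+(\sum_{i\geq i_1}M_i+L_1)$ has both end blocks non-empty and its middle block indexed by $\eta \cap (i_0,i_1)\cong \eta $, exhibiting $L$ as a $(1+\eta +1)$-sum and yielding $1+\eta +1\strong L$ by Fact \ref{facts:basics on epi}.1.
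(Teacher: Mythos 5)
Your proof is correct, and its core is in substance the same decomposition the paper uses --- except that where the paper simply cites Rosenstein's Theorem~4.9 (every non-scattered countable order is a dense sum of scattered orders), you reprove that theorem from scratch via the equivalence relation $x\sim y\iff[x,y]$ is scattered, which is exactly the standard proof of the cited result; this makes your argument self-contained at the cost of some length. The genuine divergence is in the uniqueness step: the paper takes two decompositions $L=L_0+\hat L=L_0'+\hat L'$, observes that one of $L_0,L_0'$ is an initial segment of the other, and derives a contradiction from $\hat L$ acquiring a non-empty scattered initial segment; you instead identify $L_0$ (resp.\ $L_1$) canonically as the least (resp.\ greatest) $\sim$-class, showing that any admissible decomposition must coincide with the canonical one. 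Your route buys a little more: the canonical identification makes the mutual exclusivity of the four cases (which the paper dispatches in one terse line) transparent, since the case is read off the isomorphism type of $L/{\sim}$. Your direct regrouping argument for the final claim $1+\eta+1\strong L$ is also fine and slightly more explicit than the paper, which leaves that step to the reader. All the supporting facts you invoke (non-empty initial segments of $\eta$ are $\eta$ or $\eta+1$; $\{j\in\eta\mid j<i^*\}\cong\eta$; scattered orders are closed under finite sums) are correct and used correctly.
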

\begin{proof}
The four cases are mutually exclusive because $\eta \nstrong K$ for every
scattered $K$.

By \cite[Theorem 4.9]{Rosens1982} $L$ is a sum of scattered orders on a dense
index set which, since $L$ is non-scattered, is one of $\eta$, $1+\eta$,
$\eta+1$ and $1+\eta+1$. Each one of the four cases corresponds to one of the
cases in the statement of the proposition. It remains to prove uniqueness in
the last three cases.

Take case (2) and suppose there are $L_0,L'_0$ scattered and $\hat{L},
\hat{L}'$ above $\eta$ such that $L=L_0+\hat{L}=L'_0+\hat{L}'$ holds. Suppose
$L_0\neq L_0'$, then as both orders are tails of $L$ one is a suborder of the
other, so for instance $L_0\subset L_0'$. Hence $L_0'=L_0+L_0''$ for some
scattered $L_0''$, so $\hat{L}=L_0''+\hat{L}'$ should hold, which is
impossible since we supposed that $\eta\strong\hat{L}$ holds. The other cases
are similar.
\end{proof}

In cases (2)--(4) of Proposition \ref{structnonscat} the suborder $L_0$
(respectively $L_1$) is called the \emph{scattered initial tail}
(respectively the \emph{scattered final tail}) of $L$.

\begin{prop}
The relation $ \strong $ has four minimal elements on infinite orders having
countable coinitiality or a minimum, and countable cofinality or a maximum:
$[\omega ],[\omega +1],[1+\omega^\star],[\omega^\star]$.
\end{prop}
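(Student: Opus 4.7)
The plan is to check four things: that each of $\omega$, $\omega+1$, $1+\omega^\star$, $\omega^\star$ belongs to the class, that they are pairwise $\strong$-incomparable, that every infinite $L$ in the class admits one of them as a $\strong$-predecessor, and that each of the four is itself minimal. Membership is immediate from the shape: $\omega$ has a minimum and $cof(\omega)=\omega$; $\omega^\star$ has a maximum and $coi(\omega^\star)=\omega$; and $\omega+1$, $1+\omega^\star$ have both a minimum and a maximum. Both incomparability and minimality will rest on Fact~\ref{facts:basics on epi}(4a), which turns $K\strong L$ into $K\inj L$, and Fact~\ref{facts:basics on epi}(1), which turns $K\strong L$ into a decomposition $L=\sum_{i\in K}L_i$ with non-empty summands.

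For pairwise incomparability, most comparisons are killed by Fact~\ref{facts:basics on epi}(4a) alone: $\omega$ and $\omega+1$ contain infinite ascending chains that $\omega^\star$ and $1+\omega^\star$ lack, and dually for descending chains. The only remaining cases are $\omega\strong\omega+1$, $\omega+1\strong\omega$ and their $\star$-duals; for these I would use Fact~\ref{facts:basics on epi}(1). An $\omega$-indexed sum of non-empty orders has no maximum, excluding $\omega\strong\omega+1$; and in a putative $\omega=\sum_{i\in\omega+1}L_i$ the non-empty last summand $L_\infty$ would be a non-empty final segment of $\omega$, forcing the remaining $\omega$ many non-empty pieces to fit inside a finite initial segment of $\omega$, which is impossible.

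The heart of the proof is showing that every infinite $L$ in the class has one of the four $\strong L$. I would begin with the useful criterion that $\omega\strong L$ if and only if $L$ has no maximum and $cof(L)\leq\omega$: from a strictly increasing cofinal $\omega$-sequence $x_0<x_1<\cdots$ in $L$, one reads off the decomposition $L=\sum_{i\in\omega}L_i$ with $L_0=\{y\in L : y<x_1\}$ and $L_n=\{y\in L : x_n\leq y<x_{n+1}\}$ for $n\geq 1$; these are non-empty convex pieces whose union is $L$ precisely because $L$ has no maximum and the $x_n$ are cofinal. Symmetrically, $\omega^\star\strong L$ iff $L$ has no minimum and $coi(L)\leq\omega$. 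This immediately handles every $L$ in the class with no maximum (the hypothesis gives $cof(L)\leq\omega$, so $\omega\strong L$) or with no minimum ($\omega^\star\strong L$).

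The main obstacle is the case where $L$ has both a minimum $m$ and a maximum $M$, because then the hypothesis imposes no control at all on the interior. To handle it I would invoke the classical Ramsey-type fact that every infinite linear order contains a subchain of type $\omega$ or of type $\omega^\star$; since $L$ is infinite, $(m,M)$ is infinite, so such a subchain exists inside it. Given an $\omega$-subchain $a_0<a_1<\cdots$ in $(m,M)$, set $I=\{x\in L : x\leq a_n\text{ for some } n\}$ and $F=L\setminus I$: then $I$ is a non-empty initial segment cofinalized by $\{a_n\}$, so it has no maximum and $cof(I)\leq\omega$, while $F$ is non-empty since it contains $M$. The criterion gives $\omega\strong I$, and writing $L=I+F$ as an $(\omega+1)$-indexed sum realizes $\omega+1\strong L$. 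A symmetric argument using an $\omega^\star$-subchain in $(m,M)$ yields $1+\omega^\star\strong L$. Minimality of each $X_i$ among the four then follows: if $K\strong X_i$ with $K$ in the class and infinite, then $K\inj X_i$ by Fact~\ref{facts:basics on epi}(4a), so $K$ is an infinite suborder of $X_i$, and the sum characterization (used exactly as in the incomparability argument) rules out every possibility except $K\equiv X_i$.
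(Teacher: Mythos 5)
Your proof is correct, and it follows the same case analysis as the paper (no minimum gives $\omega^\star$, no maximum gives $\omega$, and when both endpoints exist a Ramsey-type extraction of an $\omega$- or $\omega^\star$-chain from the infinite interior gives $\omega+1$ or $1+\omega^\star$), but it runs on a different engine. The paper observes that all four candidate orders are complete and then invokes Lemma~\ref{completevsall} three times: a coinitial decreasing sequence yields $\omega^\star\strong L$, a cofinal increasing sequence yields $\omega\strong L$, and a monotone sequence inside $\{a\}+L'+\{b\}$ yields $1+\omega^\star\strong L$ or $\omega+1\strong L$, each time by reversing an injection into a surjection via completeness. You instead bypass that lemma entirely and build the required sum decompositions by hand from Fact~\ref{facts:basics on epi}(1), cutting $L$ along the chosen chain into non-empty convex blocks. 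This is more elementary and self-contained (you are in effect reproving the special instances of Lemma~\ref{completevsall} you need), at the cost of not exploiting machinery the paper has already set up. You also spell out two things the paper leaves as one-line assertions: the pairwise incomparability of the four orders (via Fact~\ref{facts:basics on epi}(4a) for the mixed pairs and the sum characterization for $\omega$ versus $\omega+1$ and its dual) and the minimality of each of the four, which the paper does not address explicitly at all. Both arguments are sound; yours is longer but leaves fewer details to the reader.
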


\begin{proof}
The four linear orders $\omega$, $\omega +1$, $1+\omega^\star$,
$\omega^\star$ are pairwise $ \strong $-incomparable, and as they are complete we can
use Lemma \ref{completevsall}. Given $L$, if $L$ does not have a least (or a
last) element, then there is a coinitial decreasing (or a cofinal increasing)
sequence in $L$ and $\omega^\star \strong L$ (or $\omega \strong L$) by
Lemma \ref{completevsall}. Otherwise $L=\{ a\} +L'+\{ b\} $ and $L'$ contains
either a decreasing sequence (in which case $1+\omega^\star \strong L$) or an
increasing sequence (and then $\omega +1 \strong L$), again by Lemma
\ref{completevsall}.
\end{proof}

Lemma \ref{completevsall} allows the following description of the cones above
the aforementioned elements, for generic orders $L$:
\begin{itemize}
\item $\omega \strong L$ if and only if $cof(L) = \omega$;
\item $\omega +1 \strong L$ if and only if there is a bounded countable
    increasing sequence in $L$;
\item $1+\omega^\star \strong L$ if and only if there is a bounded
    countable decreasing sequence in $L$;
\item $\omega^\star \strong L$ if and only if $coi(L) = \omega$.
\end{itemize}

\subsection{The bqo $ \strong $ on $\lin$}
By Lemma \ref{completevsall}.1, in the very special case of complete linear
orders with first and last element any order preserving injection can be
reversed into an order preserving surjection. As a consequence, $ \strong $
is indeed bqo on the fragment of $\lin$ consisting of complete orders with
minimum and maximum.

We are now going to extend this to all countable linear orders using the
completion of any linear order $K$, coloring the elements of the completion
according to whether they already are in $K$ or they represent a gap of $K$,
and making sure that the final order is bounded.

\begin{definition}
Given $L \in \Lin$, define the \emph{closure} of $L$, denoted $\overline{L}$,
as the order obtained by completing $L$ and then possibly adding a first or a
last element in case $L$ does not have them. Let the \emph{complete coloring
of $L$} be the map $c_L: \overline{L} \to 3$ defined by
\[
c_L(x) = \begin{cases}
2& \mbox{if } x\in L;\\
1& \mbox{if } x\in\{\min \overline{L} ,\max \overline{L} \} \mbox{ and } x\notin L; \\
0& \mbox{ otherwise.}
\end{cases}
\]
Let us denote by $\colorcplt$ the order on $3^{(\Lin,\injcont)}$ of
Definition \ref{defn:preservebqos}, where $3$ is quasi-ordered by the
identity.
\end{definition}

Notice that if $L \in \lin$ is non-scattered then $\overline{L} \notin \lin$,
as it contains a copy of $\mathbb{R}$.

The next lemma shows that if the colorings on the closures of two orders are
comparable with respect to $\colorcplt$, the injection can be reversed into
an order preserving surjection between the original orders. This generalizes
the fact we mentioned at the beginning of this section.

\begin{lemma}\label{prop:reversing cplte colorings}
Given $K$ and $L$ in $\Lin$, if $c_K\colorcplt c_L$ then $K\strong L$.
\end{lemma}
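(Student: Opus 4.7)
The hypothesis $c_K \colorcplt c_L$ unwinds to a continuous order preserving injection $g : \overline{K} \to \overline{L}$ that preserves colors, i.e.\ $c_L \circ g = c_K$. My plan is to first lift $g$ to an epimorphism on the completions using Lemma \ref{completevsall}, and then to argue that this epimorphism restricts to an order preserving surjection from $L$ onto $K$.

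Since $\overline{K}$ is complete and has both a minimum $a$ and a maximum $b$, Lemma \ref{completevsall}.1 applied to $g$ produces an epimorphism $\widetilde G : \overline{L} \to \overline{K}$, given for $y \geq g(a)$ by $\widetilde G(y) = \sup\{x \in \overline{K} : g(x) \leq y\}$ and by $\widetilde G(y) = a$ for $y < g(a)$. Note that $\widetilde G \circ g = \mathrm{id}_{\overline{K}}$. Color preservation at once gives $g(K) \sub L$: each $x \in K$ has $c_K(x) = 2$, hence $c_L(g(x)) = 2$, so $g(x) \in L$. Consequently every $x \in K$ equals $\widetilde G(g(x))$ with $g(x) \in L$, which is the easy inclusion $K \sub \widetilde G(L)$.

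The bulk of the proof is the reverse inclusion $\widetilde G(L) \sub K$. I would argue by contradiction: suppose $y \in L$ while $x_0 := \widetilde G(y) \in \overline{K} \setminus K$, so that $c_K(x_0) \in \{0,1\}$ and hence $g(x_0) \notin L$. Writing $A = \{x \in \overline{K} : g(x) \leq y\}$, continuity of $g$ via Lemma \ref{continuity} gives $g(x_0) = g(\sup A) = \sup g(A) \leq y$; since $y \in L$ but $g(x_0) \notin L$, this strengthens to $g(x_0) < y$. If $x_0$ is either an interior gap of $K$ or the added minimum of $\overline{K}$ (with $\min K$ absent), then $x_0$ is the infimum in $\overline{K}$ of a nonempty set $S \sub K$ of elements lying strictly above $x_0$; continuity of $g$ combined with $g(x_0) < y$ then produces $x \in S$ with $g(x) < y$, whence $x \in A$ and $x > x_0 = \sup A$, a contradiction. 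The remaining case is that $x_0$ is the added maximum of $\overline{K}$ with $\max K$ absent: then $g(x_0) = \max \overline{L}$, so $g(x_0) \leq y$ forces $y = \max \overline{L}$, which cannot lie in $L$ because color preservation at $\max \overline{K}$ yields $c_L(\max \overline{L}) = 1$; contradiction.

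Combining the two inclusions, $\widetilde G$ restricts to an order preserving surjection $L \to K$, witnessing $K \strong L$. The main obstacle is the case analysis above; I expect it to be resolved by translating the topological fact that $x_0$ is a gap or an added boundary of $\overline{K}$ into the supremum/infimum identities furnished by Lemma \ref{continuity}.
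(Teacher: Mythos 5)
Your proposal is correct and follows essentially the same route as the paper: both define the dual map $y\mapsto\sup\{x\in\overline{K}\mid g(x)\leq y\}$, observe it is a left inverse of $g$ and hence an epimorphism of the completions, and then rule out $\widetilde G(y)\notin K$ for $y\in L$ by a case analysis (interior gap, added minimum, added maximum) combining Lemma \ref{continuity} with color preservation. The only point to tidy up is the line $\widetilde G(y)=\sup A$ with $\sup g(A)\leq y$, which presupposes $A\neq\emptyset$; in the corner case $y<g(\min\overline{K})$ with $\min\overline{K}\notin K$ one instead notes that $c_L(g(\min\overline{K}))=1$ forces $g(\min\overline{K})=\max\overline{L}$, contradicting injectivity exactly as in your added-maximum case.
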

\begin{proof}
Fix $K$ and $L$ in $\Lin$, and suppose there exists a continuous, order
preserving injective map $f: \overline{K} \rao \overline{L}$ such that for
all $x\in\overline{K}$ we have $c_K(x)=c_L(f(x))$. In particular, $x\in K$ if
and only if $f(x)\in L$.

The map $f$ admits a canonical dual map $g:\overline{L} \lgrao \overline{K}$
defined by
\[
g(y) =\sup\{x\in\overline{K}\mid f(x) \leq y \}
\]
(this includes the case $g (y)=\min \overline{K} $ whenever $\{x \in
\overline{K} \mid f (x)\leq y\} =\emptyset $). As $g(f(x)) = x$ for every $x
\in \overline{K}$, the map $g$ is a surjective order preserving map from
$\overline{L}$ onto $\overline{K}$. It is now sufficient to prove that
$\im(g\rest{L})=K$ holds.

If $x \in K$ then $f(x) \in L$ and we have $g(f(x))=x$, so that $K \sub
\im(g\rest{L})$ holds.

Let $y \in L$ and suppose towards a contradiction that $g(y) \notin K$. There
are three possible cases:
\begin{itemize}
\item[(a)] there are non-empty sets $A,B\subseteq K$ such that $g(y)=\sup
    A=\inf B$;
\item[(b)] $g(y)=\min \overline{K}$;
\item[(c)] $g (y)=\max \overline{K}$.
\end{itemize}

(a) Notice that $f(a) \leq y$ for every $a \in A$ and hence
\begin{align*}
f(g(y)) &= f(\sup A)\\
& = \sup f(A) \leq y
\end{align*}
by Lemma \ref{continuity}.

On the other hand $f(b) > y$ for every $b \in B$ and hence, using again Lemma
\ref{continuity},
\begin{align*}
f(g(y))&=f(\inf B)\\
&=\inf f(B) \geq y.
\end{align*}

Thus $f(g(y))= y$ holds, against $c_K(g(y)) =0$ and $c_L(y) =2$.

(b) In this case we have $f(x)>y$ for every $x \in \overline{K} \setminus
\{\min \overline{K}\}$. Since $\min \overline{K} = \inf (\overline{K}
\setminus \{\min \overline{K}\})$ Lemma \ref{continuity} implies that $f(\min
\overline{K}) \geq y$. But then, since $c_K (\min \overline{K}) =1$, we must
have $f(\min \overline{K}) = \max \overline{L}$ which is impossible as
$\overline{K}$ has more than one element.

(c) In this case we have $f(x) \leq y$ for every $x \in \overline{K}
\setminus \{\max \overline{K}\}$ and, arguing as in (b), we obtain $f(\max
\overline{K}) = \min \overline{L}$, which is also a contradiction.
\end{proof}

We can now prove our main result.

\begin{theorem}\label{mainbqos}
The qos $(\scat,{\strong})$ and $(\lin,{\strong})$ are bqos.
\end{theorem}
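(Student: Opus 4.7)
My plan is to use Lemma~\ref{prop:reversing cplte colorings} as a reduction from $\strong$ to $\colorcplt$ and then invoke Theorem~\ref{vEMS}, handling the scattered and non-scattered parts of $\lin$ separately. For $(\scat,\strong)$, I would first observe that the Dedekind completion $\overline{K}$ of a countable scattered order $K$ is again in $\scat$: scatteredness holds because any $\eta$-suborder of $\overline{K}$ would, via the Dedekind density of $K$ in $\overline{K}$, force an $\eta$-suborder of $K$, contradicting $K\in\scat$; countability is a classical fact, provable for instance by induction on Hausdorff rank. Hence $c_K\in 3^{(\scat,\injcont)}$. Since $\scat\subseteq\lin$ and $(\lin,\injcont)$ preserves bqos by Theorem~\ref{vEMS}, so does the subclass $(\scat,\injcont)$; and since $3$ is a finite bqo, $(3^{(\scat,\injcont)},\colorcplt)$ is bqo. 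Lemma~\ref{prop:reversing cplte colorings} then turns $K\mapsto c_K$ into a reduction from $(\scat,\strong)$ into this bqo, so any bad $\scat$-array for $\strong$ would compose with it to produce a bad $\colorcplt$-array, a contradiction.

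To pass to $(\lin,\strong)$, I would show that each non-scattered $L\in\lin$ is $\strong$-equivalent to some $K_L\in\{\eta,1+\eta,\eta+1,1+\eta+1\}$ matching its endpoint pattern. The inequality $L\strong K_L$ is Proposition~\ref{propnonscat}. For the converse $K_L\strong L$, I would apply Proposition~\ref{structnonscat} to decompose $L=L_0+\hat{L}+L_1$, where $L_0,L_1$ are scattered and appear exactly when $L$ has the corresponding endpoint, and where $\hat{L}$ has an explicit $\eta$-sum representation $\hat{L}=\sum_{q\in\eta}A_q$ coming from $\eta\strong\hat{L}$. Setting $M_x=L_0$ (resp.\ $M_x=L_1$) at the minimum (resp.\ maximum) of $K_L$ when present, and $M_q=A_q$ at each $q\in\eta$ in the interior, yields $L=\sum_{x\in K_L}M_x$, i.e., an epimorphism $L\to K_L$ by Fact~\ref{facts:basics on epi}(1).

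Consequently $(\lin,\strong)$ decomposes as the union of $(\scat,\strong)$ and the four additional $\equiv$-classes $[\eta],[1+\eta],[\eta+1],[1+\eta+1]$. Each of these four $\equiv$-classes is trivially bqo (any array valued in a single $\equiv$-class is good), and by the Facts after Definition~\ref{def:qobasics} every finite union of bqos is a bqo, so $(\lin,\strong)$ is bqo. The main delicacy I expect is in the four-way collapse for non-scattered orders: carefully matching the scattered tails to the endpoints of $K_L$ in each of the four cases, and taking full advantage of the explicit $\eta$-sum representation of $\hat{L}$ in the interior.
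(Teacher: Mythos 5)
Your treatment of $(\scat,\strong)$ is correct and is essentially the paper's argument: the completion of a countable scattered order stays in $\scat$, Lemma~\ref{prop:reversing cplte colorings} turns $K\mapsto c_K$ into a reduction of $\strong$ to $\colorcplt$, and Theorem~\ref{vEMS} finishes. The problem is the non-scattered part: the asserted four-way collapse is false. It is not true that every non-scattered $L\in\lin$ is $\equiv$-equivalent to the member of $\{\eta,1+\eta,\eta+1,1+\eta+1\}$ with the same endpoint pattern. Take $L=\omega^\star+\eta$, which has neither endpoint, so your $K_L$ would be $\eta$; but $\eta\nstrong L$. Indeed, if $g:L\to\eta$ were an epimorphism, then $g(\omega^\star)$ would be a non-empty downward closed subset of $\eta$ (if $y<g(a)$ with $a\in\omega^\star$, every $z$ with $g(z)=y$ satisfies $z<a$, hence lies in the initial segment $\omega^\star$), so $g(\omega^\star)$ contains an interval $\left]{\leftarrow},g(a)\right]$ of $\eta$ and is non-scattered; on the other hand $g\rest{\omega^\star}$ witnesses $g(\omega^\star)\strong\omega^\star$, so by Fact~\ref{facts:basics on epi}.4(a) $g(\omega^\star)$ embeds into $\omega^\star$ --- a contradiction. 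The precise false step is the claim that the scattered tails $L_0,L_1$ of Proposition~\ref{structnonscat} ``appear exactly when $L$ has the corresponding endpoint'': $\omega^\star+\eta$ has no minimum yet has non-empty scattered initial tail $\omega^\star$, and your construction leaves it nowhere to go under a map onto $\eta$. The same obstruction gives $\eta\nstrong\eta+\omega$, and in fact the non-scattered countable orders form infinitely many $\equiv$-classes (essentially as many as $(\scat,\strong)$ has).

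The repair is not to collapse the non-scattered orders onto four representatives but to reduce them to their scattered tails, which is what the paper does. The four cases of Proposition~\ref{structnonscat} partition the non-scattered countable orders; case (1) is indeed a single $\equiv$-class; in case (2) one checks that $L_0\strong M_0$ implies $L\strong M$ (map $M_0$ onto $L_0$ by hypothesis, and $\hat M$ onto $\hat L$ by composing an epimorphism $\hat M\to\eta$ with one $\eta\to\hat L$, the latter given by Proposition~\ref{propnonscat}(1)), so $L\mapsto L_0$ reflects $\strong$ and case (2) inherits bqo from $(\scat,\strong)$; case (3) is symmetric, and case (4) uses $L\mapsto(L_0,L_1)$ together with closure of bqos under finite products. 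Your final step --- closure of bqos under finite unions --- is the right assembly, but the pieces you feed into it are not the right ones.
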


\begin{proof}
Recall that if $L \in \lin$ then $L$ is scattered if and only if $L$ has
countably many initial intervals (\cite[\S6.7]{Fra00}). Hence if $L \in
\scat$ then $\overline{L}$ is countable and complete, so that $\overline{L}
\in \scat$. By Lemma \ref{prop:reversing cplte colorings} the map $\Phi :
\scat \to 3^{ \scat },K\mapsto c_K$ satisfies $\Phi (K) \colorcplt \Phi
(L)\Rightarrow K \strong L$. But using Theorem \ref{vEMS},
$(3^{(\scat,\injcont)}, \colorcplt)$ is bqo, and finally so is
$(\scat,{\strong})$.

Now it will be shown that each of the classes of linear orders corresponding
to the four cases of Proposition \ref{structnonscat} is a bqo under $ \strong
$. The linear orders falling in case (1) constitute a unique $\equiv$-class,
so they form a bqo. For the orders in case (2), assign to each such $L$ its
scattered initial tail $L_0$. So, for $L,M$ in this class, one has $L_0
\strong M_0\Rightarrow L \strong M$; since we already proved that $( \scat ,{
\strong })$ is a bqo, this shows that this class is a bqo. Similarly for case
(3). Finally, to each $L$ satisfying case (4), assign the pair $(L_0,L_1)$ of
its scattered initial and final tails. So, for $L,M$ in this class, $L_0
\strong M_0\wedge L_1 \strong M_1\Rightarrow L \strong M$; since $( \scat ,{
\strong })$ is a bqo and bqos are closed under finite products, this
establishes that $ \strong $ is a bqo for the orders in case (4) too.

Since bqos are closed under finite unions, this allows to conclude that $( \lin ,{ \strong })$ is a bqo.
\end{proof}

\subsection{Preserving bqos}
Next, one could ask if $ \strong $ preserves bqos. Notice that to be
meaningful, Definition \ref{defn:preservebqos} cannot be taken verbatim,
since otherwise any $\strong$-strictly increasing sequence would provide a
decreasing sequence in $Q^{\lin}$, for any qo $Q$. In any reasonable
adaptation of the definition, the roles of $f_0$ and $f_1$ should be switched
and the existence of a surjection $g: \dom(f_1) \rao \dom (f_0)$ should be
required. The first definition that comes to mind is probably the following.

\begin{definition}
Given a qo $Q$ the class $Q^{( \Lin ,\strong)}$ is quasi-ordered by setting
$f_0 \strong' f_1$ if and only if there exists an order preserving surjection
$g: \dom (f_1) \rao \dom (f_0)$ such that $\forall y \in \dom(f_1)\,
f_0(g(y)) \leq_Q f_1(y)$.
\end{definition}

However, even finite orders do not preserve bqos for this notion.

\begin{prop}
The qo $2^{(\omega,\strong')}$ (where the elements $0$ and $1$ of $2$ are
incomparable) admits an infinite antichain.
\end{prop}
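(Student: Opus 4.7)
The plan is to translate $\strong'$ on $2^\omega$ into a concrete combinatorial condition and then exhibit an explicit antichain in the resulting structure.

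First, I would unpack the definition. For $f_0, f_1 \in 2^\omega$, since the two elements of $2$ are incomparable, the inequality $f_0(g(y)) \leq_2 f_1(y)$ collapses to equality, so $f_0 \strong' f_1$ amounts to the existence of an order preserving surjection $g: \omega \rao \omega$ with $f_0 \circ g = f_1$. Each such $g$ partitions $\omega$ into consecutive intervals $g^{-1}(i)$, which must all be finite and non-empty: finiteness is forced because $g$ is surjective onto the unbounded order $\omega$ (any infinite fibre would prevent larger indices from lying in the image). Equivalently, $f_1$ is a \emph{blow-up} of $f_0$, obtained by replacing each entry $f_0(i)$ by a non-empty finite run of the same value.

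Next, I would identify the invariant of blow-ups. Decompose each $f \in 2^\omega$ into its maximal constant blocks, recording a block value sequence (alternating in $2$) and a block length sequence $(\ell_k)_k$ with $\ell_k \geq 1$. Since a blow-up never merges blocks of distinct values and never splits a block, the block value sequence is preserved; within each block of $f_0$ the blow-up factors are positive integers summing to the length of the corresponding block of $f_1$. Thus for $f_0, f_1$ with the same infinite alternating block value sequence (say both $0, 1, 0, 1, \ldots$) one obtains the clean characterization: $f_0 \strong' f_1$ if and only if $\ell_k^{(0)} \leq \ell_k^{(1)}$ for every $k$.

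It then remains to exhibit the antichain. For each $n \in \omega$ let $f_n \in 2^\omega$ be the sequence whose block value sequence is $0, 1, 0, 1, \ldots$ and whose block lengths are $\ell_k^{(n)} = 2$ for $k \neq n$ and $\ell_n^{(n)} = 1$. For distinct $m, n$, neither pointwise inequality between $(\ell_k^{(m)})_k$ and $(\ell_k^{(n)})_k$ can hold: position $m$ gives $\ell_m^{(m)} = 1 < 2 = \ell_m^{(n)}$, while position $n$ gives $\ell_n^{(n)} = 1 < 2 = \ell_n^{(m)}$. By the characterization above, $\{f_n \mid n \in \omega\}$ is an infinite antichain.

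I do not expect a serious obstacle; the only mildly delicate point is the finiteness of the fibres $g^{-1}(i)$ in the first step, which is what rules out infinite blow-up blocks and keeps the block structure rigid. Everything else is direct bookkeeping with maximal block decompositions.
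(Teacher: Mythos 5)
Your argument is internally sound, but it is aimed at the wrong quasi-order. In the notation $2^{(\omega,\strong')}$ the symbol $\omega$ denotes the \emph{class} of structures whose members are the finite ordinals $n<\omega$, viewed as finite linear orders: this is the reading forced by Definition \ref{defn:preservebqos} (an element of $Q^{\Cal{C}}$ has as its domain \emph{some} $\Cal{C}$-structure, here some $n\in\omega$) and by the sentence immediately preceding the proposition, which announces that ``even finite orders do not preserve bqos for this notion''. So the elements of $2^{(\omega,\strong')}$ are \emph{finite} binary sequences, whereas your antichain consists of infinite sequences $f_n\colon\omega\rao 2$; these are simply not members of the quasi-order named in the statement, so the proposition as stated is not established. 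What you do prove, correctly, is the related fact that the $2$-colorings of the single order $\omega$ fail to be wqo under $\strong'$: your observations that the fibres of an order preserving surjection of $\omega$ onto $\omega$ are finite convex sets, that comparability therefore amounts to a block-by-block blow-up, and that the sequences with one shortened block form an antichain, are all fine on their own terms.

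The good news is that your central device --- alternation of maximal blocks forces rigidity --- is exactly the paper's, and transplanting it to finite sequences repairs everything, in fact more cheaply than your blow-up analysis. Let $s_n$ be the alternating word $0101\cdots$ of length $2n$. For $0<m<n$ one has $s_n\nstrong' s_m$ simply because there is no surjection of a $2m$-element order onto a $2n$-element one; and $s_m\nstrong' s_n$ because any order preserving surjection $g$ from the domain of $s_n$ onto the smaller domain of $s_m$ must identify two consecutive points $i,i+1$, at which $s_n$ takes distinct values, so the condition $s_m\circ g=s_n$ (your correctly collapsed form of the $\strong'$ requirement, valid since $2$ is an antichain) cannot hold. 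No decomposition into maximal blocks or characterization of the full comparability relation is needed for the finite case.
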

\begin{proof}
For $n>0$ let $s_n$ be the sequence that alternates 0's and 1's of length
$2n$. Take $m,n$ two integers with $0<m<n$, then $s_n \nstrong's_m$ since
$m<n$. Fix any order preserving surjection $g:n\rao m$, as $m<n$ there is an
integer $i<n$ such that $g(i)=g(i+1)$, but $s_n(i)\neq s_n(i+1)$ so $g$
cannot witness that $s_m\strong's_n$. Consequently $(s_n)_{n\in\omega}$ is an
infinite antichain.
\end{proof}

To find a better definition observe that $f_0 \strong' f_1$ if and only if
for every $x \in \dom (f_0)$
\[
\forall y \in \dom (f_1) (g(y)=x \implies f_0(x) \leq_Q f_1(y)).
\]
Now notice that the displayed formula is equivalent to $\{f_0(x)\}
\leq_Q^\sharp f_1(g^{-1}(x))$ where $f_1(g^{-1}(x)) = \{f_1(y) \mid y \in
\dom(f_1) \land g(y)=x\}$ and $\leq_Q^\sharp$ is sometimes called the Smyth
quasi-order: for $A,B \in\Pow(Q)$, $A \leq_Q^\sharp B$ if and only if
$\forall b \in B\, \exists a \in A\, a \leq_Q b$.

There is another natural quasi-order on $\Pow(Q)$, which is variously known
as the domination quasi-order, the Egli-Milner quasi-order, or the Hoare
quasi-order: for $A,B \in\Pow(Q)$, $A \leq_Q^\flat B$ if and only if $\forall
a \in A\,\exists b \in B\, a \leq_Q b$ (both $\leq_Q^\sharp$ and
$\leq_Q^\flat$ have been studied from the viewpoint of wqo and bqo theory in
\cite{powqo}). If we ask that $\{f_0(x)\} \leq_Q^\flat f_1(g^{-1}(x))$ for
every $x \in \dom (f_0)$ we obtain the following definition, which we will
show makes $\lin$ with surjections preserve bqos.

\begin{definition}
Given a qo $Q$ the class $Q^{( \Lin ,\strong)}$ is quasi-ordered by setting
$f_0 \strong^Q f_1$ if and only if there exists an order preserving
surjection $g: \dom (f_1) \rao \dom (f_0)$ such that
\[
\forall x \in \dom (f_0)\, \exists y \in \dom (f_1) (g(y)=x \land f_0(x) \leq_Q f_1(y)).
\]
\end{definition}

When $f\in Q^{( \Lin ,\strong)}$ has domain $L$, it will be often convenient to stress this fact by denoting $f=(L,f)$.

\begin{definition}
Let $Q$ be a qo and let $\overline{Q}$ be the disjoint union of $Q$ with two
mutually incomparable elements $0$ and $1$. For any $f=(L,f) \in
Q^{(\Lin,\strong)}$ we define the \emph{closure} of $f$, denoted
$ \overline f =(\overline{L}, \overline f )$, as the element of
$\overline{Q}^{(\Lin,\strong)}$ defined as follows. The order $\overline{L}$
is obtained by completing $L$ and then possibly adding a first or a last
element in case $L$ does not have them. The coloring $ \overline f $ of
$\overline{L}$ is defined by
\[
\overline f (x) = \begin{cases}
f(x)& \mbox{if } x\in L;\\
1& \mbox{if } x\in\{\min \overline{L} ,\max \overline{L} \} \mbox{ and } x\notin L; \\
0& \mbox{ otherwise.}
\end{cases}
\]
Let us denote by $\color$ the order on $\overline{Q}^{(\Lin,{\injcont})}$ of
Definition \ref{defn:preservebqos}.
\end{definition}

\begin{lemma}\label{prop:reversing cplte coloringsQ}
Given $(K,f_0)$ and $(L,f_1)$ in $Q^{(\Lin,\strong)}$, if $(\overline{K},
\overline{f_0} ) \color (\overline{L}, \overline{f_1})$ then $(K,f_0)
\strong^Q (L,f_1)$.
\end{lemma}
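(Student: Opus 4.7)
The plan is to mimic the proof of Lemma \ref{prop:reversing cplte colorings}, but track the extra $Q$-information carried by the colorings. From the hypothesis we obtain a continuous order preserving injection $f:\overline{K}\to\overline{L}$ with $\overline{f_0}(x)\leq_{\overline{Q}}\overline{f_1}(f(x))$ for all $x\in\overline{K}$. Since $0$ and $1$ are mutually incomparable in $\overline{Q}$ and incomparable with every element of $Q$, this inequality forces three things: (i) $x\in K$ if and only if $f(x)\in L$ (the value is in $Q$ on both sides); (ii) $x\in\{\min\overline{K},\max\overline{K}\}\setminus K$ implies $f(x)\in\{\min\overline{L},\max\overline{L}\}\setminus L$; and crucially (iii) for every $x\in K$ we have $f_0(x)\leq_Q f_1(f(x))$.

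Next, I would define the dual map $g:\overline{L}\to\overline{K}$ exactly as in Lemma \ref{prop:reversing cplte colorings}, namely
\[
g(y)=\sup\{x\in\overline{K}\mid f(x)\leq y\},
\]
with the convention that $g(y)=\min\overline{K}$ when the set is empty. As noted in that proof, $g$ is an order preserving surjection onto $\overline{K}$ and satisfies $g\circ f=\mathrm{id}_{\overline{K}}$.

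The main step is to show that $g$ restricts to a surjection from $L$ onto $K$. This is essentially the same three-case argument as before: for $y\in L$, if $g(y)\notin K$ then $\overline{f_0}(g(y))\in\{0,1\}$, and splitting according to whether $g(y)$ is a gap of $K$ or a missing extremum of $\overline{K}$, Lemma \ref{continuity} forces $f(g(y))=y$ in the gap case (contradicting $\overline{f_0}(g(y))=0$ and $\overline{f_1}(y)\in Q$), and forces $f$ to collapse two extrema of $\overline{K}$ to the same extremum of $\overline{L}$ in the other two cases (contradicting injectivity). The argument goes through unchanged because only the comparisons between the colors $0,1$ and the other values are used, and these remain incomparable in $\overline{Q}$.

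Finally, I verify the Hoare/domination condition. Given $x\in K$, take $y=f(x)$; by (i) we have $y\in L$, by the retraction property $g(y)=x$, and by (iii) $f_0(x)\leq_Q f_1(y)$. This witnesses $(K,f_0)\strong^Q(L,f_1)$. The only real obstacle is checking that the three-case contradiction in the middle step still applies with $\overline{Q}$-valued colorings in place of $3$-valued ones; once one observes that the obstruction rests solely on the incomparability of the two extremal colors with everything else, the proof transfers verbatim, and the switch from Smyth to Hoare in the definition of $\strong^Q$ is precisely what makes the witness $y=f(x)$ available.
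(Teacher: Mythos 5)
Your proof is correct and follows essentially the same route as the paper, which simply notes that the argument of Lemma \ref{prop:reversing cplte colorings} carries over and that the witness $y=f(x)$ gives $g(f(x))=x$ and $f_0(x)\leq_Q f_1(f(x))$. Your additional observations --- that the incomparability of $0$ and $1$ with everything else is all that the three-case argument uses, and that the Hoare (rather than Smyth) condition is what makes $y=f(x)$ suffice --- are exactly the points the paper leaves implicit.
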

\begin{proof}
The proof is essentially the same as the proof of Lemma \ref{prop:reversing
cplte colorings}. Given $f$ witnessing $(\overline{K}, \overline{f_0}) \color
(\overline{L}, \overline{f_1})$, we define $g$ and prove that $g\rest{L}$ is
an order preserving surjection onto $K$ exactly as before. Since $g(f(x)) =
x$ and $f_0(x) \leq_Q f_1(f(x))$ for every $x \in K$ we have $(K,f_0)
\strong^Q (L,f_1)$.
\end{proof}

The theorem we obtain from Lemma \ref{prop:reversing cplte coloringsQ} could
be used to obtain the first part of Theorem \ref{mainbqos} as a corollary.

\begin{theorem}\label{presbqos}
The class $(\scat,{\strong})$ preserves bqos, i.e.\ if $Q$ is a qo then so is
$Q^{(\scat,\strong)}$ under $\strong^Q$.
\end{theorem}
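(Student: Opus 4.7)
The plan is to mirror the scattered portion of Theorem \ref{mainbqos} in the presence of $Q$-colorings, replacing the $3$-valued coloring by one valued in the augmented quasi-order $\overline Q$ and using Lemma \ref{prop:reversing cplte coloringsQ} to transfer bqo-ness. Fix a bqo $Q$. I would first note that $\overline Q$, being the disjoint union of the three bqos $Q$, $\{0\}$ and $\{1\}$, is itself a bqo by closure of bqos under finite unions. Theorem \ref{vEMS}, together with the fact that subclasses of classes preserving bqos still preserve bqos, then yields that $(\scat,{\injcont})$ preserves bqos; applied to $\overline Q$, this gives that $(\overline Q^{(\scat,\injcont)},\color)$ is a bqo.

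The bridge between the two quasi-orders is the map $\Phi_Q : Q^{(\scat,\strong)} \to \overline Q^{(\scat,\injcont)}$ given by $\Phi_Q(L,f) = (\overline L, \overline f)$. This is well defined because, as already observed in the proof of Theorem \ref{mainbqos}, whenever $L\in\scat$ the completion $\overline L$ is again in $\scat$, and adding at most two endpoints preserves this. By Lemma \ref{prop:reversing cplte coloringsQ},
\[
\Phi_Q(f_0)\color \Phi_Q(f_1)\Rightarrow f_0\strong^Q f_1.
\]

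To conclude, I would argue by contradiction: a bad $Q^{(\scat,\strong)}$-array $h:[X_0]^\omega\to Q^{(\scat,\strong)}$ would give rise to $\Phi_Q\circ h$, which is still an array (preimages of singletons are unions of open sets $h^{-1}(\{y\})$) and is bad by contraposition of the displayed implication, contradicting the first paragraph. The conceptual work has already been packaged into Lemma \ref{prop:reversing cplte coloringsQ}, so no serious obstacle remains; the only delicate check, that $\overline L$ stays in $\scat$, is inherited from the earlier argument.
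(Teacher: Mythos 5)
Your proposal is correct and follows essentially the same route as the paper, which proves Theorem \ref{presbqos} by repeating the scattered part of the proof of Theorem \ref{mainbqos} verbatim with Lemma \ref{prop:reversing cplte coloringsQ} in place of Lemma \ref{prop:reversing cplte colorings}; your extra observations (that $\overline{Q}$ is a bqo as a finite union of bqos, that $\overline{L}\in\scat$ for $L\in\scat$, and the explicit bad-array contradiction) are exactly the details the paper leaves implicit.
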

\begin{proof}
Exactly as the first part of the proof of Theorem \ref{mainbqos}, using Lemma
\ref{prop:reversing cplte coloringsQ} in place of Lemma \ref{prop:reversing
cplte colorings}.
\end{proof}

Notice however that the second part of the proof of Theorem \ref{mainbqos}
(dealing with $\lin$ in place of $\scat$) does not go through in this case.
We do not know whether the result can be extended to $ \lin $.

\section{Description of $ \strong $ on some special classes of orders} \label{descriptionofspecialclasses}

\subsection{$ \strong $ on ordinals}
When restricted to ordinal numbers, the structure of relation $ \strong $
admits a neat description.

\begin{prop} \label{strongonordinals}~
\begin{enumerate}
\item Let $\alpha =\omega^{\gamma_0}n_0+\ldots +\omega^{\gamma_k} n_k$,
    $\beta =\omega^{\delta_0}m_0+\ldots +\omega^{\delta_h}m_h$ be limit
    ordinals (that is $\gamma_k,\delta_h>0$) in Cantor normal form. Then
$$\alpha \strong \beta \sse \alpha\leq\beta\wedge cof(\alpha )=cof(\beta ) \wedge \gamma_k\leq\delta_h .$$
\item If $\alpha $ is a successor ordinal and $\beta $ is any ordinal,
then
$\alpha \strong \beta \sse \alpha\leq\beta $.
\item If $\alpha $ is a limit ordinal and $\beta $ is a successor ordinal,
    then $\alpha \nstrong \beta $.
\end{enumerate}
\end{prop}

\begin{proof}~
\begin{enumerate}
\item Assume that $\alpha \strong \beta $ holds. Then so does
    $\alpha\leq\beta $ as there exists an increasing, cofinal injection
    $f:\alpha\to\beta $. Also we have $cof(\alpha )=cof(\beta )$ by Fact
    \ref{facts:basics on epi}.5. Moreover $f$ maps the last occurrence of
    $\omega^{\gamma_k}$ in the Cantor normal form of $\alpha$ cofinally
    into $\beta $. This implies that a final interval of this
    $\omega^{\gamma_k}$ is mapped increasingly into $\omega^{\delta_h}$. By
    indecomposability $\omega^{\gamma_k}$ embeds into $\omega^{\delta_h}$,
    so $\gamma_k \leq \delta_h$.

Conversely, assume that $\alpha\leq\beta$, $cof(\alpha )=cof(\beta )$ and
$\gamma_k\leq\delta_h$ hold. Then we have $\alpha
=\alpha'+\omega^{\gamma_k}$ and $\beta =\alpha'+\beta'+\omega^{\delta_h}$
for some ordinals $\alpha',\beta'$. To apply Lemma \ref{completevsall}.2 it
suffices to show that there is an increasing cofinal
$f:\omega^{\gamma_k}\to\omega^{\delta_h}$. Let $\rho =cof(\alpha
)=cof(\beta )=cof(\omega^{\gamma_k})=cof(\omega^{\delta_h})$ and let
$\varphi :\rho\to\omega^{\gamma_k}$, $\psi :\rho\to\omega^{\delta_h}$ be
cofinal and increasing, such that $\varphi (0)=0$ holds and $\varphi $ is
continuous at limits. Now define inductively a new increasing and cofinal
function $\psi':\rho\to\omega^{\delta_h}$ by letting
\begin{itemize}
\item $\psi'(0)=0$,
\item $\psi'$ continuous at limits, and
\item $\psi'(\xi+1)=\max (\psi (\xi +1),\psi'(\xi)+(\varphi (\xi +1)-\varphi (\xi )))$,
\end{itemize}
where for $\sigma, \tau$ ordinals with $\tau \leq \sigma$, their difference
$\sigma-\tau$ is defined as the unique ordinal $\lambda $ such that $\tau +
\lambda = \sigma$. This is possible, since $\varphi(\xi+1) - \varphi(\xi)<
\omega^{\gamma_k} \leq \omega^{\delta_h}$. For each $\zeta<
\omega^{\gamma_k}$ there exist uniquely determined $\xi<\rho$ and $\tau<
\varphi(\xi+1) - \varphi(\xi)$ such that $\zeta = \varphi(\xi) + \tau$
holds. Set $f(\zeta) = \psi'(\xi)+ \tau$.

\item By Lemma \ref{completevsall}.1.

\item By Fact \ref{facts:basics on epi}.3.\qedhere
\end{enumerate}
\end{proof}

\begin{cor} \label{iimpliesstrong}
Let $\beta$ be an ordinal. Then $\alpha \strong \beta $ for every non-null
$\alpha\leq\beta $ if and only if $\beta$ is countable and a finite multiple
of an indecomposable ordinal.
\end{cor}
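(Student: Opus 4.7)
The plan is to apply Proposition~\ref{strongonordinals} to carefully chosen test ordinals $\alpha\leq\beta$, splitting into the cases where $\beta$ is a successor and where it is a limit.

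For necessity, I would first suppose $\beta$ is a successor. If $\beta$ were infinite, then $\omega<\beta$ is a limit while $\beta$ is a successor, so Proposition~\ref{strongonordinals}.3 would give $\omega\nstrong\beta$, contradicting the hypothesis. Hence $\beta$ must be finite; writing $\beta=n$ one sees that $n=\omega^{0}\cdot n$ is a countable finite multiple of the indecomposable ordinal $\omega^{0}=1$. Next, I would suppose $\beta$ is a limit ordinal, with Cantor normal form $\beta=\omega^{\delta_{0}}m_{0}+\ldots+\omega^{\delta_{h}}m_{h}$ and $\delta_{h}\geq 1$. Testing $\alpha=\omega$ against Proposition~\ref{strongonordinals}.1 forces $cof(\beta)=\omega$. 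If $h>0$, then $\delta_{0}\geq\delta_{h}+1$ and so $\omega^{\delta_{h}+1}\leq\beta$; but Proposition~\ref{strongonordinals}.1 applied to $\alpha=\omega^{\delta_{h}+1}$ would require $\delta_{h}+1\leq\delta_{h}$, a contradiction. Hence $h=0$ and $\beta=\omega^{\delta_{h}}\cdot m_{h}$ is a finite multiple of an indecomposable ordinal. Finally, if $\beta\geq\omega_{1}$, the test $\alpha=\omega_{1}$ would demand $cof(\omega_{1})=\omega_{1}=\omega$, which is absurd; so $\beta$ is countable.

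For sufficiency, I would assume $\beta=\omega^{\delta}\cdot n$ with $\delta$ countable and $n\geq 1$, and pick an arbitrary non-null $\alpha\leq\beta$. If $\alpha$ is a successor, Proposition~\ref{strongonordinals}.2 immediately yields $\alpha\strong\beta$. If $\alpha$ is a limit, then $\alpha<\omega^{\delta+1}$ forces the leading (hence also the smallest) Cantor exponent of $\alpha$ to be at most $\delta$; moreover, since $\alpha$ is a countable limit ordinal, its cofinality equals $\omega=cof(\beta)$. Proposition~\ref{strongonordinals}.1 then gives $\alpha\strong\beta$.

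The only real difficulty is spotting the right test ordinals ($\omega$, $\omega^{\delta_{h}+1}$, and $\omega_{1}$) in the necessity direction; once these are identified, the rest is routine bookkeeping with Cantor normal forms and cofinalities.
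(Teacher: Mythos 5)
Your proof is correct and follows essentially the same route as the paper: both directions reduce to Proposition~\ref{strongonordinals} via Cantor normal forms, with the necessity direction obtained by feeding suitable test ordinals into that proposition (you use $\omega$, $\omega^{\delta_h+1}$ and $\omega_1$ where the paper uses $\omega^{\delta_0}m_0$ and the observation that limit ordinals below an uncountable $\beta$ have differing cofinalities). The differences are only in bookkeeping, not in substance.
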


\begin{proof}
Let $\beta $ be countable and finite multiple of an indecomposable ordinal,
that is $\beta =\omega^{\delta }m$ for some $m>0$. Then every non-null
$\alpha\leq\beta $ is either a successor ordinal or it has countable
cofinality and it has Cantor normal form $\alpha =\omega^{\gamma_0}n_0+\ldots
+\omega^{\gamma_h}n_h$, with $\gamma_h\leq\delta $. Now apply Proposition
\ref{strongonordinals}.

On the other hand, if $\beta $ is uncountable there are limit ordinals less
than $\beta $ with different cofinalities, so there cannot be an epimorphism
of $\beta $ onto each of them. Finally, if $\beta$ is not finite multiple of
an indecomposable ordinal, then it has Cantor normal form $\beta
=\omega^{\delta_0}m_0+\ldots +\omega^{\delta_h}m_h$ with $h\geq 1$ and, by
Proposition \ref{strongonordinals} there cannot be an epimorphism from $\beta
$ onto $\omega^{\delta_0}m_0$.
\end{proof}

\subsection{Exploiting completeness}
Some of the ideas used in previous sections can be employed to find an
explicit description of $\strong$ on some other classes of linear orders.

\begin{definition}
According to \cite{Rosens1982}, if $L \in \lin$ and $x\in L$ let $c(x)=\{
y\in L\mid [x,y] \mbox{ is finite} \} $ be the \emph{condensation} of $x$.
Let also $L^1=\{ c(x)\}_{x\in L}$ with the natural order. This is the
\emph{condensation} of $L$.
\end{definition}

We first consider the class of complete bounded $\sigma$-scattered linear
orders. Given such an $L$, define a coloured linear order $(L',\varphi_L)$ on
the set of colours $\{1, 2, 3, \ldots, {\leftarrow}, {\rightarrow}\}$. These
colours are ordered by $\sqsubseteq$ which is the usual order relation on
$\{1, 2, 3, \ldots\}$, is such that $n \sqsubseteq {\leftarrow}$ and $n
\sqsubseteq {\rightarrow}$, while $\leftarrow$ and $\rightarrow$ are
incomparable.

The order $L'$ is obtained from $L^1$ by replacing each condensation class of
order type $\zeta$ with two consecutive elements, the members of a pair of
intervals of order types $\omega^\star$ and $\omega$, respectively, of which
the class is the union.

We now need to define $\varphi_L$. Given $x\in L'$, there are various
possibilities:
\begin{itemize}
\item if $x \in L^1$ is a finite condensation class, then $\varphi_L(x) =
    |x|$;
\item if $x \in L^1$ is a condensation class of order type $\omega^\star$
    or $\omega $, then $\varphi_L(x)$ is $\leftarrow$ or $\rightarrow$,
respectively;
\item If $x$ is one of the two intervals replacing a condensation class
    $x'\in L^1$ of order type $\zeta$, let $\varphi_L(x)$ be $\leftarrow$
    or $\rightarrow$ according to whether it is the first or the second of
    the two elements in the order $L'$.
\end{itemize}

\begin{prop}
Given complete bounded $\sigma$-scattered linear orders $K$ and $L$, if there
is an order preserving injection $g:L'\to K'$ with $\forall x\in L'\
\varphi_L(x)\sqsubseteq\varphi_K (g(x))$ then $L \strong K$.
\end{prop}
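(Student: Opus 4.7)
The plan is to reduce $L \strong K$ to the construction of an order preserving injection $f \colon L \to K$: since $L$ is complete and bounded, Lemma \ref{completevsall}.1 (applied with our $L$ in the role of the lemma's $K$) then produces the desired epimorphism $K \to L$.

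To build $f$ from $g$, first I would associate to each $x \in L'$ a block $B_L(x) \sub L$ whose order type matches $\varphi_L(x)$: if $x \in L^1$ is a condensation class of type $n$, $\omega$, or $\omega^\star$, then $B_L(x)$ is that class; if $x$ is one of the two halves replacing a $\zeta$-condensation class, then $B_L(x)$ is the corresponding $\omega^\star$-piece when $\varphi_L(x)=\leftarrow$, and the $\omega$-piece when $\varphi_L(x)=\rightarrow$. Define $B_K(y) \sub K$ for $y \in K'$ analogously. These families partition $L$ and $K$ respectively, and reflect the orders on $L'$ and $K'$ in the strong sense that $x <_{L'} y$ iff every element of $B_L(x)$ precedes every element of $B_L(y)$ in $L$, and similarly on the $K$-side.

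The key step is that the hypothesis $\varphi_L(x) \sqsubseteq \varphi_K(g(x))$ supplies, for each $x \in L'$, an order preserving injection $f_x \colon B_L(x) \to B_K(g(x))$: when $\varphi_L(x) = n$, the block $B_L(x)$ has $n$ elements and $B_K(g(x))$ has at least $n$ (whether $\varphi_K(g(x))$ is an integer $\geq n$ or one of $\leftarrow, \rightarrow$); when $\varphi_L(x) \in \{\leftarrow, \rightarrow\}$, the incomparability of $\leftarrow$ and $\rightarrow$ in the color order forces $\varphi_K(g(x)) = \varphi_L(x)$, so both blocks have the same type and $f_x$ can be taken as an order isomorphism. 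Gluing these into $f(a) = f_x(a)$ for $a \in B_L(x)$ yields a map $L \to K$ which is injective (by injectivity of each $f_x$ together with the pairwise disjointness of the $B_K(g(x))$, guaranteed by injectivity of $g$) and order preserving (within a block by $f_x$; across distinct blocks $B_L(x), B_L(y)$ with $x <_{L'} y$ because $g(x) <_{K'} g(y)$ forces $B_K(g(x))$ to lie entirely below $B_K(g(y))$ in $K$). No serious obstacle arises; the only mild bookkeeping concerns $\zeta$-classes whose two halves may be sent by $g$ to non-adjacent points of $K'$, but this merely inserts other blocks of $K$ between their images while preserving the global order.
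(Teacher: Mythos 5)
Your proof is correct, but it takes a different route from the paper's. You reduce the problem to constructing an order preserving \emph{injection} $f:L\to K$ (glued block-by-block from injections $B_L(x)\to B_K(g(x))$, which exist precisely because $c\sqsubseteq c'$ implies the order type coded by $c$ embeds into the one coded by $c'$) and then invoke Lemma \ref{completevsall}.1, which applies since $L$ is complete and bounded. The paper instead builds the epimorphism $h:K\to L$ directly: on a block $g(x)$ in the range of $g$ it takes any order preserving surjection onto $x$ (using that $c\sqsubseteq c'$ also implies the type coded by $c'$ surjects onto the one coded by $c$), and on blocks outside the range it is constant, with value $\min L$, $\max L$, or $\sup\bigcup\{x\in L'\mid g(x)<y\}$ as appropriate; completeness and boundedness of $L$ are used there to make these constants exist. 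The two arguments are close relatives --- the proof of Lemma \ref{completevsall} is exactly the ``reverse the embedding by taking suprema'' device that the paper's direct construction inlines --- but your version is cleaner in that it delegates all the surjectivity bookkeeping to the already-proved lemma and leaves only the (easily checked) existence of the embedding, at the cost of producing a less explicit epimorphism. Your parenthetical points are all sound: the blocks are convex and partition the two orders, injectivity of $g$ keeps the target blocks disjoint, $\leftarrow$ and $\rightarrow$ being $\sqsubseteq$-maximal and incomparable forces equality of colours on the infinite blocks, and non-adjacency of the images of the two halves of a $\zeta$-class is harmless for an embedding.
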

\begin{proof}
Under the given hypotheses, we are going to define an epimorphism $h:K\to L$.

Given $y\in K'$, if $y$ is in the range of $g$, say $g(x)=y$, define $h$ on
$y$ as any order preserving surjection onto $x$. Otherwise, there are three
cases. If $y$ is less than every element in the range of $g$, define $h$ on
$y$ as the constant function with value the least element of $L$. Similarly,
if $y$ majorizes the range of $g$, let $h$ on $y$ be constant of value the
maximum of $L$. Finally, suppose $g$ takes values both smaller and bigger
than $y$. Then $h$ on $y$ will be constant with value $\sup \bigcup \{ x\in
L'\mid g(x)<y\} $.
\end{proof}

By Theorem \ref{Laver} we obtain the following.

\begin{cor} \label{completebounded}
When restricted to complete, bounded, $\sigma $-scattered orders, the
relation $ \strong $ is bqo.
\end{cor}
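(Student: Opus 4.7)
The plan is to apply Theorem \ref{Laver} to the class of $\sigma$-scattered linear orders under $\inj$, with values in a small bqo of colors, and then transport the conclusion back to $\strong$ by means of the map $L\mapsto(L',\varphi_L)$ built before the Proposition. Concretely, let $C$ be the quasi-ordered set $(\{1,2,3,\ldots,\leftarrow,\rightarrow\},\sqsubseteq)$, and let $\Cal{D}$ denote the class of $\sigma$-scattered linear orders with $\inj$-morphisms. For every complete, bounded, $\sigma$-scattered $L$ the pair $(L',\varphi_L)$ is an element of $C^{\Cal{D}}$ in the sense of Definition \ref{defn:preservebqos}, and the Proposition is precisely the order-reflecting property
\[
(L',\varphi_L)\leq(K',\varphi_K)\text{ in }C^{\Cal{D}}\quad\Longrightarrow\quad L\strong K.
\]

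Two routine verifications must be performed. First, $C$ is a bqo: $\omega$ with its usual order is a bqo (being a countable well-order), and $C$ is obtained from $\omega$ by adjoining the two-element antichain $\{\leftarrow,\rightarrow\}$ on top, so $C$ is a finite union of bqos, hence bqo. Second, $L'$ is itself $\sigma$-scattered whenever $L$ is. Every condensation class $c(x)$ is convex and has the property that each of its closed intervals is finite, and such an order type necessarily lies in $\{n:n<\omega\}\cup\{\omega,\omega^\star,\zeta\}$. Consequently $L^1$ is obtained from $L$ by collapsing $\sigma$-scattered convex pieces, so $L^1$ is $\sigma$-scattered; and $L'$ is obtained from $L^1$ by splitting each $\zeta$-class into a pair of consecutive points, which preserves $\sigma$-scatteredness. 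These two facts together with Theorem \ref{Laver} give that $C^{\Cal{D}}$ is a bqo.

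To conclude, suppose toward a contradiction that $\strong$ is not bqo on complete, bounded, $\sigma$-scattered orders, and let $f$ be a bad array witnessing this. Postcomposing $f$ with the map $L\mapsto(L',\varphi_L)$ gives a $C^{\Cal{D}}$-array; open-preimage is preserved because point preimages are unions of point preimages of $f$, and badness is preserved by the contrapositive of the displayed implication. This contradicts the bqo-ness of $C^{\Cal{D}}$. The only point that requires some care is keeping track of the direction of the implication in the Proposition: we have exactly the direction needed to lift bad arrays from the $\strong$ side into the colored-injection side, and it would be useless reversed. Apart from this small bookkeeping, the argument is a direct application of the preceding results and there is no substantive obstacle.
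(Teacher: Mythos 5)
Your proof is correct and follows exactly the route the paper intends: the paper's entire justification is ``By Theorem \ref{Laver}'', and you have simply filled in the (correct) routine details --- that the colour set is a bqo, that $L'$ remains $\sigma$-scattered, and that the preceding Proposition lets one pull bad arrays back in the right direction.
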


\begin{theorem}
When restricted to complete scattered linear orders with fixed coinitiality
and cofinality, the relation $ \strong $ is bqo.
\end{theorem}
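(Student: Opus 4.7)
The plan is to imitate the proof of Corollary \ref{completebounded} by encoding each $L$ in the class by the complete coloring $c_L:\overline{L}\to 3$ from Section 3.2 and applying Lemma \ref{prop:reversing cplte colorings}. I would first split the class into four subclasses according to whether $L$ has a minimum, a maximum, both, or neither; the ``both'' subclass is exactly the one covered by Corollary \ref{completebounded}. The remaining three are handled uniformly, and I illustrate with the ``neither'' case: then $\overline{L}=\{-\infty\}+L+\{+\infty\}$ is complete bounded scattered and $c_L$ sends the two added endpoints to color $1$ and each $x\in L$ to color $2$.

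By Lemma \ref{prop:reversing cplte colorings}, $c_K\colorcplt c_L$ implies $K\strong L$, so the assignment $L\mapsto c_L$ reduces the bqo question for $\strong$ on our class to the bqo question for $\colorcplt$ on the image $\{c_L\}\subseteq 3^{(\Lin,\injcont)}$. It therefore suffices to show that this image is bqo under $\colorcplt$.

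To establish that $\{c_L\}$ is bqo under $\colorcplt$, I would adapt the argument used to derive Corollary \ref{completebounded}. Starting from $(\overline{L},c_L)$, one applies the construction of the Proposition preceding Corollary \ref{completebounded} to $\overline{L}$ to obtain a colored $\sigma$-scattered order, and further refines its coloring by an extra marker that distinguishes the singleton condensation classes of the added $\pm\infty^L$ from the others. These classes are indeed singletons, because $L$ has no minimum and no maximum. By Laver's theorem (Theorem \ref{Laver}), the family of such enriched colored $\sigma$-scattered orders is bqo under order preserving injections. The Proposition's explicit construction then turns an injection of enriched structures into an epimorphism $\overline{K}\to\overline{L}$ which, thanks to the endpoint marker and the argument in the proof of Lemma \ref{prop:reversing cplte colorings}, preserves the coloring $c$ and hence restricts to an epimorphism $L\to K$.

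The main obstacle is to reconcile the $\inj$-based Proposition and Laver's theorem with the $\injcont$-based Lemma \ref{prop:reversing cplte colorings}. Specifically, one must check that no element $y$ of the target $\overline{K}'$ of the injection which lies outside its image is assigned, by the Proposition's construction, a constant value equal to $\pm\infty^L$; otherwise the induced epimorphism would send interior elements of $\overline{K}$ to the added endpoints of $\overline{L}$, violating color preservation. The assumption of fixed coinitiality $\mu$ and cofinality $\kappa$ should enter precisely here: it forces $\pm\infty^L$ to be approached in the condensation of $\overline{L}$ by specific limit structures of order types $\mu^*$ and $\kappa$, so that encoding these approaches in the enriched coloring rules out any such gaps near the added endpoints.
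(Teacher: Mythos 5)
Your proposal has a genuine gap, and it is located exactly where you flag your ``main obstacle''. First, note that the theorem is not restricted to countable orders (the proof fixes arbitrary regular cardinals as coinitiality and cofinality), so the reduction to $\colorcplt$ via Lemma \ref{prop:reversing cplte colorings} cannot be closed: the only bqo result for $\injcont$ in the paper, Theorem \ref{vEMS}, concerns $\lin$, while Laver's Theorem \ref{Laver} concerns $\inj$. Your attempted bridge --- running the Proposition preceding Corollary \ref{completebounded} on $\overline{L}$ --- does not certify $\colorcplt$-comparability either, since that Proposition outputs an epimorphism, not a continuous injection. More importantly, the workaround breaks at the endpoints. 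Suppose $L$ has no minimum, so $-\infty^L$ is a singleton condensation class and the minimum of the encoded order $\overline{L}'$. An embedding $g:\overline{L}'\to\overline{K}'$ supplied by Laver's theorem may leave a nonempty interval of $\overline{K}'$ strictly between $g(\{-\infty^L\})=\{-\infty^K\}$ and the image of the rest of $\overline{L}'$. The Proposition's recipe sends the corresponding initial segment $S$ of $K$ to $\sup\bigcup\{x\mid g(x)<y\}=-\infty^L\notin L$; and no repair is possible within this framework, because $h(S)$ must lie below $h$ of everything above $S$, which is coinitial in $L$, and $L$ has no minimum. Crucially, no enrichment of the coloring can exclude this: ``$g$ is coinitial above the minimum'' is a global property of the particular embedding, not a local color-matching condition, and fixing the coinitiality of $L$ and $K$ guarantees that \emph{some} coinitial embedding exists, not that the one witnessing the bqo comparison is coinitial.

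The paper's proof sidesteps this by a different decomposition. Using scatteredness to find consecutive points, it writes each $L$ in the class as a $\beta$-indexed sum $\sum_{\xi<\beta}L_\xi$ of \emph{complete bounded} scattered blocks, regards $L$ as a coloring of the well-order $\beta$ by the bqo of Corollary \ref{completebounded}, and applies Theorem \ref{Laver} to $\beta$. An embedding $f:\beta\to\beta$ with $L_\xi\strong M_{f(\xi)}$ is reversed blockwise, and each skipped block $M_\gamma$ is collapsed to the single point $\min L_\delta$ for the least $\delta$ with $\gamma<f(\delta)$ --- a legitimate target precisely because every block has a minimum. That is the idea your proposal is missing: arrange the decomposition so that every gap left by the embedding has an actual element of $L$ to collapse onto, rather than trying to force the embedding to leave no gaps near the artificial endpoints.
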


\begin{proof}
Fix regular cardinals $\alpha ,\beta$.
It will be shown that each of the following classes forms a bqo.
\begin{itemize}
\item[(1)] Complete scattered linear orders with minimum and cofinality
    $\beta $.
\item[(2)] Complete scattered linear orders with maximum and coinitiality
    $\alpha$.
\item[(3)] Complete scattered linear orders with coinitiality $\alpha$ and
    cofinality $\beta $.
\end{itemize}
First remark that if $L$ is a scattered ordering, then given any two points
$x_1<x_2$ in $L$, there are  consecutive $y_1,y_2\in L$ with $x_1\leq
y_1<y_2\leq x_2$.

(1) Let $L$ be complete and scattered, with minimum and cofinality $\beta$.
There exists an increasing cofinal sequence $\{ \ell_{\xi }\}_{\xi <\beta }$
in $L$ with $\ell_0=\min L$ and such that if $\xi $ a successor ordinal the
element $\ell_{\xi }$ has an immediate predecessor in $L$, while if $\xi $ is
a limit then $\ell_{\xi }=\sup\{ \ell_{\rho }\}_{\rho <\xi }$. Indeed, fix
any cofinal increasing sequence $\{ \ell_{\xi }'\}_{\xi <\beta }$; by
possibly modifying it, it can be assumed $\ell_0'=\min L$ and that, for $\xi
$ a limit, $\ell_{\xi }'=\sup\{ \ell_{\rho }'\}_{\rho <\xi }$. Let
$\ell_0=\ell_0'$. Suppose $\{ \ell_{\xi }\}_{\xi <\gamma }$ has been defined
with the required properties and in such a way that $\forall\xi <\gamma\
\ell_{\xi }'\leq \ell_{\xi }$, in order to define $\ell_{\gamma }$. Remark
that $\{ \ell_{\xi }\}_{\xi <\gamma }$ is bounded in $L$. If $\gamma $ is
limit, let $\ell_{\gamma }=\sup\{ \ell_{\xi }\}_{\xi <\gamma }$; in
particular, $\ell_{\gamma }'\leq \ell_{\gamma }$. For $\gamma =\mu +1$
successor, let $\delta <\beta $ be least such that $\ell_{\delta }'>
\ell_{\mu }$. Find consecutive points $y_1,y_2\in L$ with $\ell_{\delta
}'\leq y_1<y_2\leq \ell_{\delta +1}'$ and let $\ell_{\gamma }=y_2$.

So $L$ is a $\beta $-sum of complete orders $L_{\xi}$ with least and last
element: $L_\xi$ has end points $\ell_{\xi }$ and the immediate predecessor
of $\ell_{\xi +1}$. Let $\varphi_L$ be the colouring of $\beta $ which maps
each $\xi <\beta $ to $L_{\xi }$.

Let $L$ and $M$ be complete and scattered, with minimum and cofinality
$\beta$. We claim that if there is an embedding $f$ of $\beta $ into itself
such that for all $\xi <\beta $ we have $L_{\xi } \strong M_{f(\xi )}$ then
$L \strong M$. Indeed, fix epimorphisms $g_{\xi }:M_{f(\xi )}\to L_{\xi }$,
and define $g: M \to L$ as follows. If $\gamma = f(\xi)$ for some $\xi<\beta$
let $g \rest {M_\gamma} = g_{\xi }$, while if $\gamma <\beta $ is not in the
range of $f$ there is a least $\delta <\beta $ such that $\gamma <f(\delta
)$: let $g \rest {M_{\gamma }}$ be constant with value $\min L_{\delta }$.

Using Corollary \ref{completebounded} and Theorem \ref{Laver} we obtain the
conclusion.

(2) and (3) are proved similarly.
\end{proof}

\bibliographystyle{alpha}
\bibliography{epimorphism}

\end{document}